\newcommand{\N}{\mathbb{N}}
\newcommand{\Q}{\mathbb{Q}}
\newcommand{\C}{\mathbb{C}}
\newcommand{\Z}{\mathbb{Z}}
\newcommand{\F}{\mathbb{F}}
\newtheorem{proposition}{Proposition}
\newtheorem{example}{Example}
\newtheorem{lemma}{Lemma}
\title{Three arithmetic sites}
\author{Lieven Le Bruyn} 
\address{Department of Mathematics, University of Antwerp \\ 
 Middelheimlaan 1, B-2020 Antwerp (Belgium) \\ {\tt lieven.lebruyn@uantwerpen.be}}
\begin{document}
\sloppy 

\maketitle

\begin{abstract}
Two new arithmetic sites are introduced, based on dynamical Belyi maps and Conway's big picture, respectively. We relate these to arboreal Galois representations, Bost-Connes data, and the original arithmetic site due to Connes and Consani.
\end{abstract}

\vskip 5mm

Renewed interest in the presheaf topos of a monoid category  (the one object category with given endomorphism monoid)  was spurred by the discovery of Connes and Consani, see \cite{CCAS}, that in the special case of $A=\N^{\times}_+$, the multiplicative monoid of strict positive numbers, the points of the so called {\em arithmetic site}, that is, of the corresponding presheaf topos
\[
\mathbf{points}(\widehat{\mathbf{A}}) = \Q^{\times}_+ \backslash \mathbb{A}^f / \widehat{\Z}^* \]
are the finite ad\`ele classes, a horrific topological space, better studied with the tools coming from non-commutative geometry.

In this note we will introduce two more such sites and investigate their interplay with the arithmetic site. The first one, {\em Belyi's site} $\widehat{\mathbf{B}}$, is constructed from the monoid of dynamical Belyi maps, as in \cite{Anderson}, and is relevant to the study of the absolute Galois group $Gal(\overline{\Q}/\Q)$, whereas the arithmetic site is relevant to the abelianisation $Gal(\overline{\Q}/\Q)^{ab}$. The second one, {\em Conway's site} $\widehat{\mathbf{C}}$, is constructed from a monoid structure on a subset of $\Gamma \backslash PGL_2^+(\Q)$, classifying projective classes of commensurable integral lattices of rank two, and is relevant to the description of groups appearing in moonshine, see for example \cite{Conway} or  \cite{LBMoonshine}.

We will study the points of these sites by constructing suitable localic covers of them. We will construct sheaves on these covers, relevant in the study of {\em arboreal Galois representations}, as in \cite{BouwDBM} (in the $\widehat{\mathbf{B}}$ case), and of {\em Bost-Connes data}, as introduced in \cite{MTabBC} (in the $\widehat{\mathbf{C}}$ case). Moreover, these three arithmetic sites are related via a triangle of geometric morphisms
\[
\xymatrix{\widehat{\mathbf{B}} \ar@{->>}[rd] \ar@{->>}[rr] & & \widehat{\mathbf{C}} \ar@{->>}[ld] \\
& \widehat{\mathbf{A}} &} \]

\section{The arithmetic site and its localic cover}

In order to illustrate the strategy we will use later on in studying the Belyi and Conway sites, let us recall our approach to the original arithmetic site $\widehat{\mathbf{A}}$, see \cite{LBSieve} for more details. 

With $\mathbf{A}_L$ we denote the poset category of the poset structure on $\N_+^{\times}$ determined by reverse division, that is, $\mathbf{A}_L$ has an object $[k]$ for every $k \in \N_+^{\times}$ and a unique morphism
$\xymatrix{[k] \ar[r] & [l]}$ whenever $l|k$. The covariant functor $\pi : \mathbf{A}_L \rTo \mathbf{A}$ sending the morphism $\xymatrix{[k] \ar[r] & [l]}$ to the endomorphism $\tfrac{k}{l}$ induces a geometric morphism between the presheaf toposes 
\[
\pi~:~\widehat{\mathbf{A}_L} \rTo \widehat{\mathbf{A}} \]
and, as $\mathbf{A}_L$ is a poset-category, $\widehat{\mathbf{A}_L}$ is a {\em locale} which we call the {\em localic cover} of $\widehat{\mathbf{A}}$. In \cite{LBSieve} it is shown that the points of $\widehat{\mathbf{A}_L}$ are in one-to-one correspondence with subsets $S \subseteq \N_+^{\times}$ closed under division and taking least common multiples. The {\em supernatural numbers} $\mathbb{S}$ is the multiplicative semigroup consisting of formal products $s = \prod_{p \in \mathbb{P}} p^{s_p}$ where $p$ runs over all prime numbers $\mathbb{P}$ and $s_p \in \N \cup \{ \infty \}$, so we have a one-to-one correspondence
 \[
 \mathbf{points}(\widehat{\mathbf{A}_L}) = \mathbb{S} \qquad S \mapsto s=\prod_{p \in \mathbb{P}} p^{max(d~:~ p^d \in S)} \]
We equip $\mathbb{S}$ with the {\em localic topology} having as its sets of opens the subsets
\[ \mathbb{X}_l(\cup_i n_i \N^{\times}_+) = \{ s \in \mathbb{S}~:~\exists i~:~n_i|s \} \]
By \cite[Theorem 1]{LBcovers} we know that the locale $\widehat{\mathbf{A}_L}$ is equivalent to the topos of sheaves of sets $\mathbf{Sh}(\mathbb{S},loc)$ of the topological space $\mathbb{S}$, equipped with the localic topology.

The upshot being that we can associate to every $S \in \widehat{\mathbf{A}_L}$, that is to every contravariant functor $S~:~\mathbf{A}_L \rTo \mathbf{Sets}$ a sheaf $\mathcal{S} \in \mathbf{Sh}(\mathbb{S},loc)$ of which the stalk $
\mathcal{S}_s$ in an infinite supernatural number $s \in \mathbb{S}-\N^{\times}_+$ extends the functor as $S([n]) = \mathcal{S}_n$ for $n \in \N^{\times~}_+$. Here a couple of examples, see also \cite[Remark 2.10 and \S 4.5]{HemelaerAT}:

{\em Finite fields: } Fix a prime number $p$ and let $S$ be the functor
\[
S~:~\mathbf{A}_L \rTo \mathbf{Sets} \qquad \begin{cases} [n] \mapsto \F_{p^n} \\
\xymatrix{[n] \ar[r] & [m]} \mapsto \F_{p^m} \rInto \F_{p^n} \end{cases} \]
for the canonical embedding of fields obtained from taking invariants under the Frobenius. The stalk of the coresponding sheaf $\mathcal{S}$ at a supernatural number $s \in \mathbb{S} - \N^{\times}_+$ is then the (infinite) algebraic extension
\[
\mathcal{S}_s = \cup_{n | s} \F_{p^n} \]
It is well known that algebraic extensions of $\F_p$ correspond up to isomorphism to supernatural numbers in this way. 

{\em UHF-algebras: } Let $S$ be the functor
\[
S~:~\mathbf{A}_L \rTo \mathbf{Sets} \qquad \begin{cases} [n] \mapsto M_n(\C) \\
\xymatrix{[n] \ar[r] & [m]} \mapsto M_m(\C) \rInto M_n(\C) \end{cases} \]
UHF-algebras are closures of chains of matrix algebras
\[
M_{n_1}(\C) \rInto M_{n_2}(\C) \rInto \hdots \]
with $n_1|n_2| \hdots$, and are known to be classified up to isomorphism by supernatural numbers, which in our set-up corresponds to taking the stalk $\mathcal{S}_s$ of the corresponding sheaf $\mathcal{S} \in \mathbf{Sh}(\mathbb{S},loc)$.

The geometric morphism $\pi : \widehat{\mathbf{A}_L} \rTo \widehat{\mathbf{A}}$ induces a surjection on the level of points
\[
\mathbb{S} = \mathbf{points}(\widehat{\mathbf{A}_L}) \rOnto \mathbf{points}(\widehat{\mathbf{A}}) = [ \mathbb{S} ] \]
by modding out all finite supernaturals, that is, $[ \mathbb{S}]$ is the set of classes for the equivalence relation
\[
s \sim s' \quad \Leftrightarrow \quad \exists n,m \in \N^{\times~}_+~:~n.s = m.s' \]
and it is easy to verify that indeed $[ \mathbb{S} ]$ coincides with the set of finite ad\`ele classes $ \Q^{\times}_+ \backslash \mathbb{A}^f / \widehat{\Z}^*$. This equivalence relation implies that the points of the arithmetic site correspond to {\em stable isomorphism classes} for the objects classified by a functor $S \in \widehat{\mathbf{A}_L}$. For example, two algebraic extentions of $\F_p$ (resp. two UHF-algebras) $\mathcal{S}_s$ and $\mathcal{S}_{s'}$ are stably isomorphic if there exist $m,n \in \N^{\times}_+$ such that
\[
\mathcal{S}_s \otimes \F_{p^n} \simeq \mathcal{S}_{s'} \otimes \F_{p^m} \qquad (resp. \quad \mathcal{S}_s \otimes M_n(\C) \simeq \mathcal{S}_{s'} \otimes M_m(\C)~). \]
It is known that stable isomorphism classes of UHF-algebras are its Morita equivalence classes. As noncommutative spaces are Morita equivalence classes of $C^*$-algebras, we can view (the points of) the arithmetic site as the {\em moduli space} of the noncommutative spaces corresponding to UHF-algebras.

\section{Conway's site and its localic cover}

In \cite{Conway} John H. Conway introduced his {\em big picture} which is a graph with vertices the left coset classes $\Gamma \backslash PGL_2^+(\Q)$, where $PGL_2^+(\Q)$ is the group of all elements in $PGL_2(\Q)$ with strictly positive determinant, and $\Gamma$ is the modular group $PSL_2(\Z)$. We can identify the set $\Gamma \backslash PGL_2^+(\Q)$ with $\Q_+ \times \Q/\Z$ via the assignment
\[
\Q_+ \times \Q/\Z \rTo \Gamma \backslash PGL_2^+(\Q) \qquad X=(M,\frac{g}{h}) \mapsto \Gamma.\begin{bmatrix} M & \frac{g}{h} \\ 0 & 1 \end{bmatrix} = \Gamma.\alpha_X \]
For two elements $X,Y \in \Q_+ \times \Q/\Z$ let $d_{XY}$ be the smallest positive rational number such that $d_{XY}.\alpha_X \alpha_Y^{-1} \in GL_2(\Z)$. The {\em hyper-distance} between the two elements is then
\[
\delta(X,Y) = det(d_{XY}.\alpha_X \alpha_Y^{-1}) \in \N_+ \]
Conway showed in \cite[p. 329]{Conway}  that this definition is symmetric and that the $log$ of the hyper-distance is a proper distance function on $\Gamma \backslash PGL_2^+(\Q)$.

The poset-category $\mathbf{C}_L$ has as its objects $X \in \Gamma \backslash PGL_2^+(\Q)$ and a unique arrow $Y \rTo X$ if and only if $\delta(1,X) \leq \delta(1,Y)$ and $\delta(1,Y)=\delta(1,X).\delta(X,Y)$ where $1$ is the class of the identity matrix. {\em Conway's big picture} is the graph derived from $\mathbf{C}_L$ by only drawing an edge for each arrow $Y \rTo X$ such that $\delta(X,Y)$ is a prime number. An object $M=(M,0)$ for $M \in \N^{\times}_+$ is called a {\em number-class}.

Conway showed that each $X=(M,\frac{g}{h})$ has exactly $p+1$ neighbours in the big picture at hyper-distance $p$ for any prime number $p$. These are the classes
\[
X_k = (\frac{M}{p},\frac{g}{hp}+\frac{k}{p}~\text{mod}~1)~\quad \text{for $0 \leq k < p$}~\quad~\text{and}~\quad X_p=(pM,\frac{pg}{h}~\text{mod}~1) \]
It is convenient to consider these $p$-neighbours as the classes corresponding to the matrices obtained by multiplying $\alpha_{X}$ {\em on the left} with the matrices
\[
P_k = \begin{bmatrix} \frac{1}{p} & \frac{k}{p} \\ 0 & 1 \end{bmatrix}~\quad~\text{for $0 \leq k < p$, and}~\quad~P_p = \begin{bmatrix} p & 0 \\ 0 & 1 \end{bmatrix} \]
The classes at hyper-distance a $p$-power from $1$ form a $p+1$-valent tree in the big picture, and the big picture itself 'factorizes' as a product of these $p$-adic trees, see \cite[p. 332]{Conway} or \cite[Lemma 3]{LBMoonshine}. That is, the class (or object) $X$ at hyper-distance $p^k$ from $1$ corresponds to a unique product $P_{i_b}.\hdots.P_{i_2}.P_{i_1}P_p^a$ of the matrices $P_i$ with $0 \leq i  < p$ and $P_p$ introduced before with $a+b=k$. Here, $p^a$ is the maximal number-class on the unique path in the big picture from $X$ to $1$. The matrices $P_i$ with $0 \leq i < p$ generate a free monoid and satisfy the relation $P_p.P_i=id$. Matrices $P_i$ and $Q_j$ corresponding to different prime numbers $p$ and $q$ satisfy a {\em meta-commutation relation}.

\begin{lemma} \label{meta2} For distinct primes $p$ and $q$ and for all $0 \leq i < p$ and $0 \leq j < q$ there exist unique $0 \leq k < p$ and $0 \leq l < q$ such that
\[
P_i.Q_j = Q_l.P_k \quad \text{and further we have} \quad P_p.Q_j = Q_a.P_p~\text{when $a=pj~\text{mod}~q$} \]
\end{lemma}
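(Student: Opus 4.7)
The plan is to verify the meta-commutation relation by direct matrix multiplication in $PGL_2^+(\Q)$, reading the resulting equalities modulo the left $\Gamma$-action. As a first step I would compute the two products
\[ P_i.Q_j = \begin{bmatrix} \tfrac{1}{pq} & \tfrac{j+iq}{pq} \\ 0 & 1 \end{bmatrix}, \qquad Q_l.P_k = \begin{bmatrix} \tfrac{1}{pq} & \tfrac{k+lp}{pq} \\ 0 & 1 \end{bmatrix}, \]
observing that both sit in the same one-parameter family of upper triangular matrices with diagonal $(1/(pq),1)$ and differ only in their $(1,2)$-entry.

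The second step is to translate equality in the quotient $\Gamma \backslash PGL_2^+(\Q)$ into an elementary congruence. A short check shows that if $\gamma = \begin{bmatrix} x & y \\ z & w \end{bmatrix} \in \Gamma$ and $\lambda > 0$ satisfy $\gamma \alpha = \lambda \alpha'$ for two upper triangular $\alpha,\alpha'$ with positive $(1,1)$-entry, then the $(2,1)$-entry forces $z = 0$, whence $xw = 1$ in $\Z$ gives $x = w = \pm 1$ (the same element of $PSL_2(\Z)$), so $\gamma$ is a unipotent shear with integer parameter $y$. Comparing the remaining entries produces $\lambda = 1$, equality of $(1,1)$-entries, and congruence of $(1,2)$-entries modulo $\Z$. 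Applied to our products, this reduces the first identity to
\[ j + iq \equiv k + lp \pmod{pq}. \]

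Existence and uniqueness of $(k,l)$ with $0 \leq k < p$ and $0 \leq l < q$ now follow from the observation that both $(i,j) \mapsto iq+j$ and $(l,k) \mapsto lp+k$ are bijections from $\{0,\dots,p-1\} \times \{0,\dots,q-1\}$ onto $\{0,1,\dots,pq-1\}$, being the mixed-radix expansions in bases $q$ and $p$ respectively; given $(i,j)$ one sets $n = iq + j$, takes $k = n \bmod p$ and $l = (n-k)/p$. The second identity is handled the same way: the products $P_p.Q_j = \begin{bmatrix} p/q & pj/q \\ 0 & 1 \end{bmatrix}$ and $Q_a.P_p = \begin{bmatrix} p/q & a/q \\ 0 & 1 \end{bmatrix}$ are equal in the quotient exactly when $a \equiv pj \pmod{q}$, which has the unique solution $a = pj \bmod q$ in $[0,q)$ since $\gcd(p,q) = 1$. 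There is no real obstacle in this proof; the only point requiring a little care is the justification of the reduction from equality in $\Gamma \backslash PGL_2^+(\Q)$ to a congruence on the $(1,2)$-entry, after which everything collapses to the uniqueness of digits in a mixed-radix expansion.
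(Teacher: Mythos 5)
Your proof is correct and follows essentially the same route as the paper: compute $P_i.Q_j$ as an upper triangular matrix with $(1,2)$-numerator $iq+j$ and invoke uniqueness of the mixed-radix expansion of an integer in $[0,pq)$. You are in fact somewhat more complete than the paper, which only records the computation of $P_i.Q_j$ (where the two sides turn out to be equal as honest matrices, so the reduction modulo $\Gamma$ is not needed) and leaves the second identity $P_p.Q_j=Q_a.P_p$ --- where equality really does only hold up to a unipotent element of $\Gamma$, as your congruence argument shows --- unverified.
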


\begin{proof}
\[
P_i.Q_j = \begin{bmatrix} \frac{1}{pq} & \frac{iq+j}{pq} \\ 0 & 1 \end{bmatrix} \]
and as $0 \leq iq+j < pq$ we have unique $0 \leq k < p$ and $0 \leq l < q$ such that $iq+j = lp+k$. \end{proof}

\begin{lemma} \label{lemma2} An object $X=(M,\frac{g}{h})$ in $\mathbf{C}_L$ at hyperdistance $N=p^k \times q^l \times \hdots \times r^s=\delta(1,X)$ for prime numbers $p < q < \hdots < r$ can be identified uniquely with a product
\[
P_{i_1}P_{i_2} \hdots P_{i_b} Q_{j_1} Q_{j_2} \hdots Q_{j_d} \hdots R_{z_1} R_{z_2} \hdots R_{z_v} P_p^aQ_q^c \hdots R_r^u = L_X.K_X \]
for unique $0 \leq i_a < p$, $0 \leq j_b < q, \hdots, 0 \leq z_u < r$ and with $a+b=k,c+d=l,\hdots,u+v=s$. 
\end{lemma}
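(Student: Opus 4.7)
The plan is to prove the lemma in two parts: \emph{existence} by reducing to the single-prime case already recalled just before the statement and then normalizing via Lemma \ref{meta2}, and \emph{uniqueness} by reading off all parameters directly from the matrix $\alpha_X$.

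For existence, the factorization of the big picture as a product of $p$-adic trees yields, for each prime $p \mid N$, a canonical projection of $X$ to the $p$-adic tree at hyper-distance $p^k$. By the single-prime normal form, this projection is uniquely of the form $P_{i_1}\cdots P_{i_b}P_p^{a}$ with $a+b=k$, and similarly for $q, \ldots, r$. Concatenating these in the prime order $p < q < \cdots < r$ yields an initial factorization
\[
\alpha_X = (P_{i_1}\cdots P_{i_b}P_p^{a})(Q_{j_1}\cdots Q_{j_d}Q_q^{c})\cdots(R_{z_1}\cdots R_{z_v}R_r^{u}).
\]
Now apply Lemma \ref{meta2}: the relation $P_pQ_j = Q_aP_p$ (and its analogues for other pairs of primes) lets us push $P_p$ rightward past any small-index letter of a different prime, relabelling that letter but preserving the block structure, while the pure powers $P_p, Q_q, \ldots, R_r$ are diagonal and therefore commute pairwise. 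Pushing first every $P_p$, then every $Q_q$, \ldots, and finally every $R_r$ to the far right collects them as $K_X = P_p^{a}Q_q^{c}\cdots R_r^{u}$, leaving $L_X$ on the left with its small-index blocks still ordered by prime.

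For uniqueness, multiplying out gives that $L_XK_X$ is upper triangular with bottom-right entry $1$, top-left entry $\dfrac{p^a q^c \cdots r^u}{p^b q^d \cdots r^v}$, and top-right entry
\[
\frac{I}{p^b} + \frac{J}{p^b q^d} + \cdots + \frac{Z}{p^b q^d \cdots r^v},
\]
where $I = i_b + p\,i_{b-1} + \cdots + p^{b-1}i_1$ and analogously for $J, \ldots, Z$. Since the class of $\alpha_X$ modulo $\Gamma$ fixes the top-left entry as $M$ and the top-right entry as $g/h$ modulo $\Z$, the $p$-adic valuation of $M$ combined with $a+b=k$ determines $(a,b)$ uniquely, and likewise $(c,d), \ldots, (u,v)$. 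Clearing the common denominator $p^b q^d \cdots r^v$ in the top-right entry yields an integer of the form $I\,q^d\cdots r^v + J\,r^v\cdots + Z$ modulo $p^b q^d \cdots r^v$; a mixed-radix counting argument shows this expression is a bijection between the tuples $(I, J, \ldots, Z)$ and $\{0, 1, \ldots, p^b q^d\cdots r^v - 1\}$, so it pins down each $I, J, \ldots, Z$, and hence each individual index via base-$p$, base-$q$, \ldots, base-$r$ expansion.

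The main obstacle is the bookkeeping in the existence step: one must verify that the meta-commutation moves really terminate at the stated normal form rather than some spurious interleaving. This is handled by pushing all diagonal letters to the right first (pushing $P_p$ past a diagonal cannot reintroduce a non-diagonal letter behind it) and observing that the left-to-right block order among small-index letters is preserved throughout, hence inherited from the initial prime-ordered concatenation.
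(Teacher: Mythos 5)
Your uniqueness argument is sound, and in fact more explicit than anything in the paper: computing the Hermite form of $L_X K_X$, recovering $(a,b),(c,d),\dots$ from the $p$-adic valuations of $M$ together with $a+b=k$, and pinning down the indices from the top-right entry via the mixed-radix bijection is a clean verification that the paper's one-line proof ("take a minimal path and apply meta-commutation") leaves entirely implicit.

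The existence half, however, starts from a false identity. The canonical projection $\pi_p(X)$ of $X$ onto the $p$-adic tree --- the vertex of $T_p$ nearest to $X$, which is what the "product of $p$-adic trees" factorization actually supplies --- does not satisfy $\alpha_X = \alpha_{\pi_p(X)}\alpha_{\pi_q(X)}\cdots\alpha_{\pi_r(X)}$. Concretely, take $p=2$, $q=3$ and $X = P_1Q_1 = (\tfrac16,\tfrac23)$: one checks $\delta(X,P_0)=3$ and $\delta(X,Q_1)=2$, so the projections are $\pi_2(X)=P_0=(\tfrac12,0)$ and $\pi_3(X)=Q_1=(\tfrac13,\tfrac13)$, yet $P_0Q_1=(\tfrac16,\tfrac16)\neq X$. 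The reason is that $\pi_p(X)$ is the vertex reached by performing the $p$-steps of a minimal path \emph{first}, so it matches the $P$-letters standing at the \emph{right} end of the word (acting first), not the left-hand $p$-block of the normal form; the two differ precisely by the meta-commutation relabelling (here $X=P_1Q_1=Q_2P_0$). If instead you define the per-prime factors so that their product is $\alpha_X$, the existence of such a splitting is the assertion being proved, so the appeal to the tree factorization is circular. The paper sidesteps this by starting from a minimal-length path from $1$ to $X$ in the big picture: each edge is left multiplication by one of the letters, so $\alpha_X$ genuinely is a product of $k+l+\cdots+s$ letters, and only then does the meta-commutation sorting (the part of your argument that is carried out correctly, including the termination bookkeeping) apply. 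Alternatively, since your mixed-radix map is a bijection and not merely an injection, your own uniqueness computation can be run backwards to construct the normal form directly from $M$, $g/h$ and $N$, which would repair existence without any path or projection argument at all.
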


\begin{proof} A path in the big picture from $1$ to $X=(M,\frac{g}{h})$ of minimal length can be viewed as a product (left multiplication) $X_l X_{l-1} \hdots X_2 X_1$ with each $X_i$ one of the matrices $P_a,Q_b,\hdots,R_u$. Now use the meta-commutation relations of the previous lemma.
\end{proof}

Here, $K_X = P_p^aQ_q^b\hdots R_r^u$ is the unique number-class at minimal hyper-distance from $X=(M,\frac{g}{h})$. For a {\em number-like class} $X=(M, \frac{g}{h})$, that is with $M \in \N^{\times}_+$, we have $K_X=Mh$ and $N=Mh^2$.

\begin{proposition} The points of $\widehat{\mathbf{C}_L}$ are in one-to-one correspondence with equivalence classes of finite or infinite sequences $X_1 \geq X_2 \geq \hdots$ of classes in $\Gamma \backslash PGL_2^+(\Q)$ under the equivalence relation
\[
X_1 \geq X_2 \geq \hdots \sim Y_1 \geq Y_2 \geq \hdots \Leftrightarrow \forall~i, \exists~j,k~:~X_i \geq Y_j,~Y_i \geq X_k \]
In particular, the finite points correspond to the objects in $\mathbf{C}_L$.
\end{proposition}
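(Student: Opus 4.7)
My plan is to follow the template used for $\widehat{\mathbf{A}_L}$ in \cite{LBSieve}: since $\mathbf{C}_L$ is a poset-category, $\widehat{\mathbf{C}_L}$ is a locale whose points will admit a concrete description as ``ideals'' of the poset, namely subsets $S$ of the objects of $\mathbf{C}_L$ that are (i) forward-closed under the arrows (if $Y \in S$ and $Y \to Z$ is an arrow, then $Z \in S$) and (ii) downward directed (for any $Y_1, Y_2 \in S$ there exists $W \in S$ with arrows $W \to Y_1$, $W \to Y_2$). Applied to $\mathbf{C}_L$, this amounts to saying that $S$ is closed under moving classes toward $1$ along minimal paths, and that any two classes of $S$ share a common ``ancestor'' in $S$ at larger hyper-distance from $1$. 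Throughout I read the notation $X \geq Y$ in the statement as the existence of an arrow $Y \to X$ in $\mathbf{C}_L$ (equivalently, $X$ lies on the minimal path from $Y$ to $1$); this convention is forced by the final clause of the proposition about finite points.

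\textbf{Chain gives ideal.} Given $X_1 \geq X_2 \geq \hdots$, I will set $S := \{Z : X_i \to Z \text{ for some } i\}$. Condition (i) is immediate by composition of arrows. For (ii), given $X_{i_1} \to Y_1$ and $X_{i_2} \to Y_2$, put $m = \max(i_1,i_2)$; the chain condition and transitivity yield $X_m \to X_{i_1}$ and $X_m \to X_{i_2}$, so $W := X_m \in S$ is a common ancestor of $Y_1,Y_2$.

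\textbf{Ideal gives chain, plus the equivalence relation.} Here I will use that $\Gamma \backslash PGL_2^+(\Q) \simeq \Q_+ \times \Q/\Z$ is countable, so $S$ can be enumerated as $\{Z_1,Z_2,\hdots\}$. I set $X_1 := Z_1$ and, inductively, apply directedness of $S$ to $\{X_n, Z_{n+1}\}$ to produce $X_{n+1} \in S$ admitting arrows $X_{n+1} \to X_n$ and $X_{n+1} \to Z_{n+1}$. The sequence $X_1 \geq X_2 \geq \hdots$ so constructed has associated ideal $S$, since each $Z_n$ is the target of an arrow from $X_n \in S$. Two chains $(X_\bullet), (Y_\bullet)$ produce the same ideal precisely when each $X_i$ is the target of an arrow from some $Y_j$ and vice versa (so that any $X_i \to Z$ factors as $Y_j \to X_i \to Z$), which is exactly the stated equivalence relation. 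Finally, a finite chain $X_1 \geq \hdots \geq X_n$ satisfies $X_n \to X_i$ for all $i$ by transitivity, hence is equivalent to the one-term sequence $X_n$; this yields the identification of finite points with the objects of $\mathbf{C}_L$.

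\textbf{Main obstacle.} The genuinely non-routine ingredient is the first step: transferring the ``closed + directed'' description of points of $\widehat{\mathbf{C}_L}$ from the flat-functor / Ind-object analysis carried out in \cite{LBSieve} for $\widehat{\mathbf{A}_L}$. That argument only uses the poset-category structure and nothing specific to divisibility, so it should adapt verbatim to $\mathbf{C}_L$; once it is in hand, the rest of the proof consists of short verifications relying only on countability of $\Gamma \backslash PGL_2^+(\Q)$ and transitivity of arrow composition in $\mathbf{C}_L$.
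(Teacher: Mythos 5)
Your proposal is correct and follows essentially the same route as the paper: the paper likewise invokes the description of points of a presheaf topos on a poset-category as upwards closed, downward directed subsets, and associates to a chain $X_1 \geq X_2 \geq \hdots$ the subset $\{ X : \exists i,\ X \geq X_i \}$, leaving the remaining verifications implicit. You simply fill in the details the paper compresses into ``the claims follow from this,'' notably the countability/enumeration argument producing a cofinal chain from a given directed subset.
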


\begin{proof} The points of the presheaf topos of a poset-category are the upwards closed subsets of the poset satisfying the condition that it contains for any two of its elements an element smaller or equal to both. To any sequence $X_1 \geq X_2 \geq \hdots$ we associate the subset $\{ X \in \Gamma \backslash PGL_2^+(\Q)~|~\exists i~:~X \geq X_i \}$. The claims follow from this. 
\end{proof}

\vskip 3mm

In \cite{MTabBC} M. Marcolli and G. Tabuada developed a broad generalisation of the classical Bost-Connes system, where roots of unity are replaced by a {\em Bost-Connes datum} $(\Sigma,\sigma_n)$ consisting of
\begin{enumerate}
\item{an Abelian group $\Sigma$, possibly equipped with a continuous action of a Galois group $G$, and}
\item{a family of commuting $G$-equivariant group endomorphisms $\sigma_n~:~\Sigma \rTo \Sigma$ such that $\sigma_n \circ \sigma_m = \sigma_{n \times m}$.}
\end{enumerate}
An illustrative example is taking $\Sigma = \overline{\Q}^*$, the algebraic numbers with the canonical Galois action, and with $\sigma_n$ the power map $x \mapsto x^n$, see \cite{MTabBC} for more instances. In many interesting examples the Bost-Connes datum $(\Sigma,\sigma_n)$ will satisfy the following additional properties
\begin{enumerate}
\item[(3)]{the kernel of $\sigma_n$ is the cyclic group of order $n$, and} 
\item[(4)]{there is a family of set-maps $s_n~:~\Sigma \rTo \Sigma$ commuting among themselves and with the $\sigma_m$ such that their restriction to $\Sigma_n = Im(\sigma_n)$ is a set-theoretic section of the image map
\[
\xymatrix{0 \ar[r] & C_n \ar[r] & \Sigma \ar@/^2ex/[r]^{\sigma_n} & \Sigma_n \ar@/^2ex/[l]^{s_n} \ar[r] & 0} \]
and $\Sigma$ is the disjoint union of $C_n$-translates of the subset $s_n(\Sigma_n)$.}
\end{enumerate}
Let us denote the group operation on $\Sigma$ additively, and consider for each prime number $p$ maps on $\Sigma$ (identified as vectors via $x \mapsto \begin{bmatrix} x \\ 1 \end{bmatrix}$) by left-multiplication by the 'matrices' for $0 \leq i < p$
\[
\mathcal{P}_i = \begin{bmatrix} s_p & x_{i,p} \\ 0 & 1 \end{bmatrix} \qquad \text{and} \qquad \mathcal{P}_p = \begin{bmatrix} \sigma_p & 0 \\ 0 & 1 \end{bmatrix} \]
where $C_p = Ker(\sigma_p) = \{ 0 = x_{0,p},x_{1,p},\hdots,x_{p-1,p} \}$. That is, we have for all $x \in \Sigma$
\[
\mathcal{P}_i.x = s_p(x) + x_{i,p} \quad \text{and} \quad \mathcal{P}_p.x = \sigma_p(x) \]
As in \cite{MTabBC}, let $\rho_n : \Sigma_n \rTo P(\Sigma)$ be the map sending an element $x \in \Sigma_n$ to the set of its pre-images under $\sigma_n$, then we have $\rho_p(x) = \{ \mathcal{P}_0.x,\mathcal{P}_1.x,\hdots,\mathcal{P}_{p-1}.x \}$. If we want these operators to satisfy the same meta-commutation relations as the $P_i$ and $Q_j$ we have to impose
\begin{enumerate}
\item[(5)]{for any two prime numbers $p$ and $q$, for all $0 \leq i < p$ and $0 \leq j < q$ we have for the unique $0 \leq k <p$ and $0 \leq l < q$ such that $i.q+j=l.p+k$ that
\[
s_p(x_{j,q})+x_{i,p}=s_q(x_{k,p})+x_{i,q} \quad \text{and} \quad \sigma_p(x_{j,q})=x_{a,q} \]
for $a \equiv p.j~mod~q$.}
\end{enumerate}

\begin{proposition} Let $(\Sigma,\sigma_p,s_p)$ be a Bost-Connes datum satisfying conditions $(1)-(5)$, then we have a presheaf $S_{\Sigma} \in \widehat{\mathbf{C}_L}$, defined by assigning to every object $X$ with unique identifier
\[
P_{i_1}P_{i_2} \hdots P_{i_b} Q_{j_1} Q_{j_2} \hdots Q_{j_d} \hdots R_{z_1} R_{z_2} \hdots R_{z_v} P_p^aQ_q^c \hdots R_r^u = L_X.K_X \]
the set
\[
S_{\Sigma}(X) = \mathcal{P}_{i_1} \circ \hdots \circ \mathcal{P}_{i_b} \circ \mathcal{Q}_{j_1} \circ \hdots \circ \mathcal{Q}_{j_d} \circ \hdots \circ \mathcal{R}_{z_1} \circ \hdots \circ \mathcal{R}_{z_v}(\Sigma_{K_X}) \]
and this defines a sheaf $\mathcal{S}_{\Sigma}$ on $\mathbf{points}(\widehat{\mathbf{C}_L})$ equipped with the localic topology.
\end{proposition}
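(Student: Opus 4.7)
The plan is to first construct $S_\Sigma$ as a well-defined presheaf on the poset-category $\mathbf{C}_L$, and then transport it across the equivalence $\widehat{\mathbf{C}_L} \simeq \mathbf{Sh}(\mathbf{points}(\widehat{\mathbf{C}_L}), loc)$ supplied by \cite[Theorem 1]{LBcovers}, exactly as in the arithmetic-site discussion of Section 1. Thus the proposition reduces to verifying that the assignment on objects is well-defined and that the natural restriction maps assemble into a contravariant functor on $\mathbf{C}_L$.

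Well-definedness on objects is immediate from Lemma \ref{lemma2}: every $X$ has a unique canonical identifier $L_X\cdot K_X$, and since $\sigma_n\circ \sigma_m = \sigma_{n\times m}$ by hypothesis $(2)$, the set $\Sigma_{K_X}=\im(\sigma_{K_X})$ is unambiguous, so the formula for $S_\Sigma(X)$ depends only on $X$. For an arrow $Y \rTo X$, the relation $\delta(1,Y)=\delta(1,X)\cdot\delta(X,Y)$ guarantees a minimal path from $1$ to $Y$ passing through $X$; writing this path as a product $M_{Y/X}\cdot L_X K_X$ of the generating matrices $P_i,P_p,Q_j,Q_q,\ldots$, I will define the restriction $S_\Sigma(X) \rTo S_\Sigma(Y)$ to be the corresponding composition of operators $\mathcal{P}_i,\mathcal{P}_p,\mathcal{Q}_j,\ldots$ applied to elements of $S_\Sigma(X)\subseteq\Sigma$.

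The main obstacle is to verify that this definition is independent of the chosen minimal path and that the resulting operator lands inside $S_\Sigma(Y)$. This is precisely where condition $(5)$ enters: a direct calculation yields
\[
\mathcal{P}_i\circ\mathcal{Q}_j(x)=s_p(s_q(x)+x_{j,q})+x_{i,p} \quad\text{and}\quad \mathcal{Q}_l\circ\mathcal{P}_k(x)=s_q(s_p(x)+x_{k,p})+x_{l,q},
\]
and since $s_p$ commutes with $s_q$ by $(4)$, the identities listed in $(5)$ are exactly what is needed to force $\mathcal{P}_i\circ\mathcal{Q}_j=\mathcal{Q}_l\circ\mathcal{P}_k$ whenever $iq+j=lp+k$; a parallel computation handles $\mathcal{P}_p\circ\mathcal{Q}_j=\mathcal{Q}_a\circ\mathcal{P}_p$ via the second identity of $(5)$. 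Consequently the monoid generated by the $\mathcal{P}_i,\mathcal{P}_p,\mathcal{Q}_j,\ldots$ as set-theoretic operators on $\Sigma$ satisfies \emph{exactly} the meta-commutation relations of Lemma \ref{meta2}, so any expression $M_{Y/X}\cdot L_XK_X$ rewrites uniquely into the canonical form $L_YK_Y$ of Lemma \ref{lemma2}, and the composed operator therefore does not depend on the chosen representative path and maps $S_\Sigma(X)$ into $S_\Sigma(Y)$ by construction. Functoriality for composable arrows $Z \rTo Y \rTo X$ follows from the same rewriting applied to $M_{Z/X}=M_{Z/Y}\cdot M_{Y/X}$.

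Once $S_\Sigma$ is confirmed to be a presheaf on $\mathbf{C}_L$, the associated sheaf $\mathcal{S}_\Sigma$ on $\mathbf{points}(\widehat{\mathbf{C}_L})$ with the localic topology is obtained by transporting $S_\Sigma$ across the topos-equivalence of \cite[Theorem 1]{LBcovers}, in complete analogy with the sheaves of finite fields and UHF-algebras attached to the arithmetic site in Section 1.
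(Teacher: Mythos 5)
The paper states this proposition without any proof at all, so there is nothing to compare your argument against; judged on its own, your proposal is the natural argument and is essentially correct. Your reduction is the right one: well-definedness on objects via the unique canonical form of Lemma~\ref{lemma2}, restriction maps along arrows $Y\rightarrow X$ defined by a minimal path from $X$ to $Y$ (which exists precisely because $\delta(1,Y)=\delta(1,X)\delta(X,Y)$), independence of the chosen path because the operators $\mathcal{P}_i,\mathcal{P}_p,\dots$ satisfy the same meta-commutation relations that effect the rewriting into canonical form, and finally transport across the spatiality of the locale $\widehat{\mathbf{C}_L}$ to get the sheaf on $\mathbf{points}(\widehat{\mathbf{C}_L})$ with the localic topology. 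Two small observations. First, your identity $\mathcal{K}_X=\sigma_{K_X}$, hence $S_\Sigma(X)=\mathcal{L}_X\mathcal{K}_X(\Sigma)$, is what makes the restriction maps land (surjectively) in $S_\Sigma(Y)$; it is worth stating explicitly. Second, the step where you deduce $\mathcal{P}_i\circ\mathcal{Q}_j=\mathcal{Q}_l\circ\mathcal{P}_k$ from condition $(5)$ silently uses that $s_p(s_q(x)+x_{j,q})=s_p(s_q(x))+s_p(x_{j,q})$; since $s_p$ is only a set-map, the displayed identity in $(5)$ (which moreover appears to contain a typo, $x_{i,q}$ for $x_{l,q}$) does not literally imply the operator identity for all $x\in\Sigma$ without some such additivity. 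This is a defect of the paper's formulation of $(5)$ rather than of your reasoning --- the paper explicitly intends $(5)$ to be whatever is needed to force the meta-commutation --- but if you want a self-contained proof you should either assume $s_p$ additive on the relevant cosets or restate $(5)$ as the operator identity itself.
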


There is a covariant functor $\delta : \mathbf{C}_L \rTo \mathbf{A}_L$ sending $X$ to $\delta(1,X)$.

\begin{proposition} The fiber $\delta^{-1}(n)$ consists of $\psi(n) = n \prod_{p | n}(1 + \frac{1}{p})$ elements where $\psi$ is Dedekind's function, and there is a canonical bijection
\[
\delta^{-1}(n) \qquad \leftrightarrow \qquad \mathbb{P}^1_{\Z/n\Z} \]
\end{proposition}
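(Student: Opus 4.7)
My plan is to count $|\delta^{-1}(n)|$ using Lemma~\ref{lemma2} and to construct the canonical bijection with $\mathbb{P}^1_{\Z/n\Z}$ through a sublattice interpretation of the primitive integer representative of $\alpha_X$.

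\textbf{Cardinality.} By Lemma~\ref{lemma2}, every $X\in\delta^{-1}(n)$ with $n = p^k q^l \cdots r^s$ is a unique reduced word $L_X K_X$, and the count splits multiplicatively over the primes dividing $n$. At the prime $p$ (with exponent $k$), the $p$-part is $P_{i_1}\cdots P_{i_b} P_p^a$ with $a+b=k$, reduced in the sense that the relation $P_0 P_p = \id$ in $\Gamma\backslash PGL_2^+(\Q)$ has been applied exhaustively, which forces $i_b\neq 0$ whenever $a,b\geq 1$ (else the word collapses and the class lands at a strictly smaller $p$-power hyper-distance). A direct enumeration of these admissible $p$-words yields $p^k + p^{k-1}=\psi(p^k)$, and multiplicativity of Dedekind's $\psi$ gives $|\delta^{-1}(n)|=\prod_{p\mid n}\psi(p^{v_p(n)})=\psi(n)$.

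\textbf{Bijection.} Associate to $X$ the integer matrix $A_X = d_{1X}\alpha_X^{-1}\in M_2(\Z)$; minimality of $d_{1X}$ forces the $\gcd$ of its entries to be $1$, and $\det A_X = n$. A left $\Gamma$-translate of $\alpha_X$ induces a right $\Gamma$-translate of $A_X$, so the column lattice $L_X = A_X\Z^2\subset\Z^2$ depends only on $X$ and has index $n$. By Smith normal form, primitivity of $A_X$ is equivalent to $\Z^2/L_X\cong\Z/n\Z$ being cyclic, so $X\mapsto L_X$ is a bijection from $\delta^{-1}(n)$ to the set of sublattices of $\Z^2$ with cyclic quotient of order $n$. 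Each such sublattice is the kernel of a surjection $\Z^2\twoheadrightarrow\Z/n\Z$, determined by a pair $(x,y)\in(\Z/n\Z)^2$ with $\gcd(x,y,n)=1$; two choices of isomorphism $\Z^2/L_X\xrightarrow{\sim}\Z/n\Z$ differ by an element of $(\Z/n\Z)^*$, so the projective class $[x:y]\in\mathbb{P}^1_{\Z/n\Z}$ is canonical. The reverse construction (take the kernel of the tautological surjection associated to $[x:y]$, extract a representative matrix in Hermite normal form, and read off $\alpha_X$) supplies the inverse.

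\textbf{Main obstacle.} The two verifications to carry out carefully are (i) the Smith-normal-form equivalence ``$A_X$ primitive $\Leftrightarrow$ $\Z^2/L_X$ cyclic of order $n$'', which comes down to identifying the first invariant factor with the $\gcd$ of the entries of a Hermite representative, and (ii) the classical parametrisation of sublattices of $\Z^2$ with cyclic quotient of order $n$ by $\mathbb{P}^1_{\Z/n\Z}$. As an independent sanity check, the Chinese remainder theorem combined with $|\mathbb{P}^1_{\Z/p^k\Z}| = p^k + p^{k-1}$ gives $|\mathbb{P}^1_{\Z/n\Z}| = \psi(n)$, matching the enumeration from Lemma~\ref{lemma2}.
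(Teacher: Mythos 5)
Your proof is correct, but it takes a genuinely different route from the paper on both counts. For the cardinality, the paper observes that the right action of $\Gamma$ (the stabilizer of the class $1$) preserves hyper-distance and acts on $\delta^{-1}(n)$ with stabilizer $\Gamma_0(n)$ at the number-class $(n,0)$, so the count is the classical index $[\Gamma : \Gamma_0(n)] = \psi(n)$; you instead enumerate the reduced words of Lemma~\ref{lemma2} directly, getting $p^k+p^{k-1}$ vertices at distance $k$ in each $(p{+}1)$-valent tree and invoking the product structure of the big picture over primes. Your enumeration is more self-contained (the paper imports $[\Gamma:\Gamma_0(n)]=\psi(n)$ as a known fact), at the mild cost of having to be careful that the reducedness condition $i_b\neq 0$ is the right normal-form constraint after meta-commutation in the mixed-prime case -- this is harmless since meta-commuting $Q_j$ past $P_p$ permutes the indices bijectively, but it deserves a sentence. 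For the bijection, the paper identifies $\delta^{-1}(n)\simeq \Gamma/\Gamma_0(n)\simeq GL_2(\Z)/\widetilde{\Gamma}_0(n)$ and then uses the action of $GL_2(\Z)$ on lines in $\mathbb{A}^2_{\Z/n\Z}$; you bypass the coset spaces entirely and send $X$ to the index-$n$ sublattice $A_X\Z^2$ with cyclic quotient, then use the standard parametrisation of cyclic sublattices by $\mathbb{P}^1_{\Z/n\Z}$. The two are of course two faces of the same classical Hecke-operator computation, but your version makes the "canonical" nature of the bijection more visible (no choice of base point in the fiber is needed) and supplies the well-definedness and injectivity checks that the paper leaves implicit.
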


\begin{proof} The stabilizer subgroup of $1$ is $\Gamma$ and any class $X$ at hyperdistance $n$ maps under $\Gamma$ to another at hyperdistance $n$. Further, the joint stabilizer of $1=(1,0)$ and $n=(n,0)$ is the mdular group $\Gamma_0(n)$. The canonical bijection then comes from
\[
\Gamma / \Gamma_0(n) \simeq GL_2(\Z) / \widetilde{\Gamma}_0(n) \simeq \mathbb{P}^1_{\Z/n\Z} \]
where the right-most bijection comes from considering the action of $GL_2(\Z)$ on $\mathbb{A}^2_{\Z/n\Z}$ and noticing that the stabilizer subgroup of the line through $(1,0)$ is $\widetilde{\Gamma}_0(n)$.
\end{proof}

The mental image of the geometric morphism $\delta : \widehat{\mathbf{C}_L} \rTo \widehat{\mathbf{A}_L}$ might therefore be that of the structural morphism
\[
\pi : \mathbb{P}^1_{\Z} \rOnto \mathbf{Spec}(\Z) \]
with the inclusion $\N^{\times}_+ \rInto C$ given by $n \mapsto (n,0)$ corresponding to the zero section.

\vskip 3mm

We define the {\em Conway monoid} $C$, having $p$ generators $P_0,\hdots,P_{p-1}$ for each prime $p$, with $\{ P_0,\hdots,P_{p-1} \}$ generating a free monoid of rank $p$. For different prime numbers $p$ and $q$ we have the meta-commutation relations, given by lemma~\ref{meta2} for all $0 \leq i < p$ and $0 \leq j < q$
\[
P_i \ast Q_j = Q_l \ast P_k, \quad \text{if} \quad  iq+j=lp+k \]
It follows from lemma~\ref{lemma2} that the elements of $C$ are in one-to-one correspondence with the subcategory $\mathbf{C}_L'$ of $\mathbf{C}_L$ consisting of the objects $X \in \mathbf{C}_L$ such that $1=(1,0)$ is the number-class of minimal distance from $X$, and that the poset-structure on $\mathbf{C}_L'$ is related to the monoid product via
\[
Y \leq X \quad \Leftrightarrow \quad \exists (!) Z~:~Y=Z \ast X \]
(uniqueness follows because $C$ is left cancellative).

Note that the multiplication is concatenation of minimal paths from $1$. That is, if $X$ has minimal path $L_X$ from $1$ and $Z$ miniimal path $L_Z$, then $Z \ast X$ is the object of $\mathbf{C}_L'$ one reaches after walking along path $L_Z$ from $X$. If however $X$ and $Z$ are objects in $\mathbf{C}_L$ with number classes $K_X$ at minimal distance from $X$ and $L_X$ a minimal path from $K_X$ to $X$, and $K_Z$ the number class at minimal distance of $Z$ and $L_Z$ a minimal path from $K_Z$ to $Z$, one might consider the product
\[
Z \ast X = L_Z.L_X.K_Z.K_X \]
but this product does not always produces an object at hyperdistance $\delta(1,X)\delta(1,Z)$ from $1$ as $L_X$ might start with a term $P_0$ for some prime number dividing $K_Z$. For example, the only consistent choice of $P_0 \ast P_p$ would be $1$, which is not at the expected hyperdistance $p^2$. I thank Jens Hemelaer for pointing out this problem.

\vskip 3mm

With $\mathbf{C}$ we denote the corresponding monoid-category, whose presheaf-topos $\widehat{\mathbf{C}}$ will be {\em Conway's site}.
Because the Conway monoid $C$ is left cancellative, we have a covariant functor $\pi : \mathbf{C}_L' \rTo \mathbf{C}$ which assigns to the unique morphism $Y \rTo X$ between two objects of $\mathbf{C}_L'$ such that $Y \leq X$ the unique element $Z \in C$ such that $Y = Z \ast X$. 

We will call the points of $\mathbf{C}_L'$ the {\em non-commutative supernatural numbers} and denote them with $\mathbb{S}_{nc}$. That is, $\mathbb{S}_{nc}$ is the set of equivalence classes of finite or infinite products $\hdots \ast A_2 \ast A_1$ in $C$ under the equivalence relation
\[
\hdots A_2 \ast A_1 \sim \hdots B_2 \ast B_1 \Leftrightarrow \forall i, \exists j,k,U,V~:~\begin{cases} B_j \ast \hdots \ast B_1 = U \ast A_i \ast \hdots \ast A_1 \\
A_k \ast \hdots \ast A_1 = V \ast B_i \ast \hdots \ast B_1 \end{cases} \]

This functor induces a surjection on the points of the corresponding presheaf toposes, giving us that
\[
\mathbf{points}(\widehat{\mathbf{C}}) = [ \mathbb{S}_{nc} ] \]
where the equivalence relations on noncommutative supernatural numbers is given by restricting to their tails, that is
\[
[ \hdots \ast A_2 \ast A_1] \sim [ \hdots \ast B_2 \ast B_1] \quad \Leftrightarrow \quad \exists i,j~:~[ \hdots \ast A_{i+1} \ast A_i] = [ \hdots \ast B_{j+1} \ast  B_j] \]
The hyper-distance $\delta(1,X)$ from $X$ to $1$ gives a morphism of monoids $\delta : C \rTo \N^{\times}_+$ and hence a geometric morphism to the arithmetic site $\widehat{\mathbf{C}} \rTo \widehat{\mathbf{A}}$. 

Connes and Consani equip the arithmetic site $\widehat{\mathbf{A}}$ with a structure sheaf $\Z_{max}=(\Z \cup \{ -\infty \},max,+)$ (the fundamental semi-ring in characteristic $1$, with addition $x \oplus y = max(x,y)$ and multiplication $x \otimes y = x+y$) in order to restrict the large group of auto-equivalences of the topos $\widehat{\mathbf{A}}$, coming from permutations among prime numbers in the monoid $\N^{\times}_+$.

The geometric morphism $\widehat{\mathbf{C}} \rTo \widehat{\mathbf{A}}$ can be seen as an alternative to this rigidification problem. The only monoid morphism of $\N^{\times~}_+$ which lifts to a monoid morphism of the Conway monoid $C$, compatible with the morphism $\delta$, is the identity map because a class at hyperdistance $p$ must be mapped to another one at hyperdistance $p$.

\section{Belyi's site and some localic subcovers}

A {\em (dynamical) Belyi map} is a rational map $B : \mathbb{P}^1_{\overline{\Q}} \rTo \mathbb{P}^1_{\overline{\Q}} $ defined over a number field with exactly three ramification points which we assume to be $\{ 0,1,\infty \}$ (and such that $B(0)=0, B(1)=1$ and $B(\infty)=\infty$), see for example \cite{Anderson} or \cite{Wood}. If $B' : \mathbb{P}^1_{\overline{\Q}}  \rTo \mathbb{P}^1_{\overline{\Q}} $ is another dynamical Belyi map, then so is their composition $B' \circ B$, that is, dynamical Belyi maps form a monoid. We will restrict here to the submonoid of dynamical Belyi {\em polynomials} and call it the {\em Belyi monoid} $B_{\overline{\Q}} $ or $B$ if we restrict to polynomials defined over $\Q$.

To a (dynamical) Belyi map $B$ we associate its {\em (framed) dessin (d'enfant)} which is a bicolored graph $D(B)$ drawn on $\mathbb{P}^1_{\C}=S^2$ with
\begin{itemize}
\item{as its {\em black vertices} the inverse images $B^{-1}(0)$, (with the vertex $0$ labeled),}
\item{as its {\em white vertices} the inverse images $B^{-1}(1)$, (with the vertex $1$ labeled),}
\item{as its edges the inverse images $B^{-1}(0,1)$ of the open interval $(0,1)$.}
\end{itemize}
If $B$ is a (dynamical) Belyi polynomial, then $D(B)$ is a (framed) {\em tree} dessin, that is, a bicolored planar tree.

Two bicolored planar trees $T$ and $T'$ are said to be {\em combinatorial equivalent} is there is a color-preserving tree isomorphism
\[
\gamma : V(T) \rTo V(T') \quad \text{and} \quad \gamma : E(T) \rTo E(T') \]
preserving the orientation around each vertex. That is, for any vertex $v$ of $T$ if $c_v=(e_1,\hdots,e_k)$ is the cyclic order of edges encountered when walking counter-clockwise around $v$, then the cyclic order of edges encountered when walking counter-clockwise around $\gamma(v)$ must be $c_{\gamma(v)}=(\gamma(e_1),\hdots,\gamma(e_k))$. A {\em framed tree dessin} is a bicolored planar tree such that a black vertex is labeled $0$ and a white vertex is labeled $1$. Two framed tree dessins $T$ and $T'$ are {\em isomorphic} if there is a combinatorial equivalence $\gamma : V(T) \rTo V(T')$ such that $\gamma(0)=0$ and $\gamma(1)=1$.

Let $Aut(\overline{\Q}) = \{ L = \alpha x + \beta : \alpha \not= 0 \}$ be the group of non-constant linear polynomials under composition. We say that two Belyi polynomials $B$ and $B'$ are {\em equivalent} if $B' = B \circ L$ for some $L \in Aut(\overline{\Q})$. G. Shabat proved that $B \leftrightarrow D(B)$ gives a one-to-one correspondence between equivalence classes of Belyi polynomials and combinatorial equivalence classes of tree dessins, see \cite[Thm. 2.2.9]{Lando}.

\begin{proposition} Every framed tree dessin $T$ is isomorphic to the framed tree dessin $D(B)$ of a unique (!) dynamical Belyi polynomial $B$.
\end{proposition}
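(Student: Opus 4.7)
The plan is to deduce this normalization statement from Shabat's theorem (cited as \cite[Thm. 2.2.9]{Lando}), which gives a bijection between equivalence classes of Belyi polynomials under $B\sim B\circ L$ and combinatorial equivalence classes of (unframed) tree dessins. A dynamical Belyi polynomial is just a Belyi polynomial satisfying $B(0)=0$ and $B(1)=1$ (the condition $B(\infty)=\infty$ being automatic for polynomials), and a framing on a tree dessin is just the choice of a black and a white vertex. So the content of the proposition is that within each equivalence class $[B]$ there is exactly one representative whose root-markers $0$ and $1$ correspond to the prescribed framed vertices of $T$, and the proof amounts to solving two linear equations for the two free parameters of $L(x)=\alpha x+\beta$.

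\emph{Existence.} Given a framed tree dessin $T$ with distinguished black vertex $v_0$ and white vertex $v_1$, Shabat's theorem produces a Belyi polynomial $B_0$ and a combinatorial equivalence $\gamma:D(B_0)\rTo T$. Let $b=\gamma^{-1}(v_0)\in B_0^{-1}(0)$ and $w=\gamma^{-1}(v_1)\in B_0^{-1}(1)$. Because black vertices and white vertices are disjoint we have $b\neq w$, so $L(x)=(w-b)x+b$ lies in $Aut(\overline{\Q})$. Setting $B=B_0\circ L$, one checks directly that $B(0)=B_0(b)=0$, $B(1)=B_0(w)=1$, and $B(\infty)=\infty$; thus $B$ is a dynamical Belyi polynomial. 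Moreover $D(B)=L^{-1}\bigl(D(B_0)\bigr)$, so $\gamma\circ L$ is a combinatorial equivalence $D(B)\rTo T$ sending the black vertex $0$ of $D(B)$ to $v_0$ and the white vertex $1$ of $D(B)$ to $v_1$; this is precisely an isomorphism of framed dessins.

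\emph{Uniqueness.} Suppose $B_1$ and $B_2$ are two dynamical Belyi polynomials whose framed dessins are isomorphic to $T$. Forgetting the framings, their tree dessins are combinatorially equivalent, so by Shabat there exists $L\in Aut(\overline{\Q})$ with $B_2=B_1\circ L$. Tracking the vertices as above, the induced combinatorial equivalence $D(B_1)\rTo D(B_2)$ is the map $x\mapsto L^{-1}(x)$ on preimage sets. The assumed isomorphism of \emph{framed} dessins sends the black $0$-marker of $D(B_1)$, namely $0$, to the black $0$-marker of $D(B_2)$, namely $0$; hence $L(0)=0$. The same argument at the white $1$-markers yields $L(1)=1$. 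A linear $L(x)=\alpha x+\beta$ satisfying both conditions must be the identity, so $B_1=B_2$.

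The only real subtlety is the compatibility between the combinatorial equivalence coming out of Shabat and the framings, i.e.\ verifying that $D(B\circ L)$ really is $L^{-1}$ applied to $D(B)$ in a way that respects cyclic orientations around vertices; this is standard (pullback of a branched cover by a Möbius transformation permutes the ramification fibers $B^{-1}(0), B^{-1}(1)$ via $L^{-1}$ and preserves the planar embedding because $L$ is orientation-preserving), and I expect this to be the only point that needs more than a one-line check. Everything else reduces to counting parameters: the automorphism group $Aut(\overline{\Q})$ is two-dimensional, and the two framing conditions $L(0)=0$, $L(1)=1$ cut it down to a single point, yielding both existence and uniqueness simultaneously.
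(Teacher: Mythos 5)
Your proof is correct and follows essentially the same route as the paper's: apply Shabat's theorem to the unframed tree, post-compose with the affine map sending $0,1$ to the preimages of the framed vertices for existence, and for uniqueness use Shabat again to get $B'=B\circ L$ and force $L=\mathrm{id}$ from $L(0)=0$, $L(1)=1$. Your explicit attention to $b\neq w$ and to the compatibility of $D(B\circ L)=L^{-1}(D(B))$ with orientations fills in details the paper leaves implicit, but the argument is the same.
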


\begin{proof} Forgetting the framing of $T$ for a moment, there is Belyi polynomial $P \in \overline{\Q}[x]$ such that $T$ and the dessin $D(P)$ are combinatorially equivalent by Shabat's result. Let $\alpha$ and $\beta$ be the vertices of $D(P)$ corresponding to the labeled vertices $0$ and $1$ of $T$ under the equivalence. Consider
\[
L = (\beta - \alpha)x+\alpha \in Aut(\overline{\Q}) \]
then $B=P \circ L$ is a Belyi polynomial such that $B(0)=P(\alpha)=0$ and $B(1)=P(\beta)=1$, whence $B$ is a dynamical Belyi polynomial with framed tree dessin isomorphic to $T$. To prove unicity, assume $B'$ is another dynamical Belyi polynomial such that $D(B')$ is isomorphic to $D(B)$. Then, by Shabat's result there exists an $L=\alpha x + \beta \in Aut(\overline{\Q})$ such that $B' = B \circ L$. But then, the combinatorial equivalence on vertices is given by
\[
L~:~V(D(B')) \rTo V(D(B)) \]
and in particular $L(0)=0$ and $L(1)=1$ whence $\beta=0$ and $\alpha=1$ whence $B' = B$.
\end{proof}

\begin{example} The bicolored planar tree
\[
\begin{tikzpicture}[scale=.7]
\tikzstyle{every node}=[draw,shape=circle,fill=white];
\node (a) at (0,0) {};
\node (b) at (2,0) {};
\tikzstyle{every node}=[draw,shape=circle,fill=black];
\node (c) at (1,0) {};
\node (d) at (3,0) {};
\draw (a) -- (c)
(b) -- (c)
(d) -- (b);
\end{tikzpicture} \]
can be realised as the tree dessin of the Belyi polynomial (even dynamical Belyi polynomial) $B=x^2(3-2x)$, as $D(B)$ is
\[
\begin{tikzpicture}[scale=.7]
\tikzstyle{every node}=[draw,shape=circle,fill=white];
\node (a) at (0,0) {};
\node (b) at (2,0) {$1$};
\tikzstyle{every node}=[draw,shape=circle,fill=black];
\node (c) at (1,0) {$\color{white}{0}$};
\node (d) at (3,0) {};
\draw (a) -- (c)
(b) -- (c)
(d) -- (b);
\end{tikzpicture}
\]
with leftmost white vertex $-\frac{1}{2}$ and rightmost black vertex $\frac{3}{2}$. There are $3$ more $(b,w)$-couples: $a=(0,-\frac{1}{2}), b=(\frac{3}{2},1)$ and $c=(\frac{3}{2},-\frac{1}{2})$. The corresponding linear maps are $L_a=-\frac{1}{2}x$, $L_b=-\frac{1}{2}x+\frac{3}{2}$ and $L_c=-2x+\frac{3}{2}$, resulting in the dynamical Belyi polynomials with corresponding framed tree dessins
\[
\begin{tikzpicture}[scale=.7]
\tikzstyle{every node}=[draw,shape=circle,fill=black];
\node (a) at (0,0) {};
\node (b) at (2,0) {$\color{white}{0}$};
\tikzstyle{every node}=[draw,shape=circle,fill=white];
\node (c) at (1,0) {};
\node (d) at (3,0) {$1$};
\draw (a) -- (c)
(b) -- (c)
(d) -- (b);
\end{tikzpicture}  \qquad \leftrightarrow \qquad \frac{1}{4} x^2 (x+3) \]

\vskip 2mm

\[
\begin{tikzpicture}[scale=.7]
\tikzstyle{every node}=[draw,shape=circle,fill=black];
\node (a) at (0,0) {$\color{white}{0}$};
\node (b) at (2,0) {};
\tikzstyle{every node}=[draw,shape=circle,fill=white];
\node (c) at (1,0) {$1$};
\node (d) at (3,0) {};
\draw (a) -- (c)
(b) -- (c)
(d) -- (b);
\end{tikzpicture}  \qquad \leftrightarrow \qquad \frac{1}{4}x(x-3)^2 \]

\vskip 2mm

\[
\begin{tikzpicture}[scale=.7]
\tikzstyle{every node}=[draw,shape=circle,fill=black];
\node (a) at (0,0) {$\color{white}{0}$};
\node (b) at (2,0) {};
\tikzstyle{every node}=[draw,shape=circle,fill=white];
\node (c) at (1,0) {};
\node (d) at (3,0) {$1$};
\draw (a) -- (c)
(b) -- (c)
(d) -- (b);
\end{tikzpicture} \qquad \leftrightarrow \qquad x(4x-3)^2 \]
Note that the bicolored planar trees
\[
\begin{tikzpicture}[scale=.7]
\tikzstyle{every node}=[draw,shape=circle,fill=white];
\node (a) at (0,0) {};
\node (b) at (2,0) {};
\tikzstyle{every node}=[draw,shape=circle,fill=black];
\node (c) at (1,0) {};
\node (d) at (3,0) {};
\draw (a) -- (c)
(b) -- (c)
(d) -- (b);
\end{tikzpicture}  \qquad \text{and} \qquad
\begin{tikzpicture}[scale=.7]
\tikzstyle{every node}=[draw,shape=circle,fill=black];
\node (a) at (0,0) {};
\node (b) at (2,0) {};
\tikzstyle{every node}=[draw,shape=circle,fill=white];
\node (c) at (1,0) {};
\node (d) at (3,0) {};
\draw (a) -- (c)
(b) -- (c)
(d) -- (b);
\end{tikzpicture}  \]
are combinatorial equivalent as they differ by a rotation.
\end{example}

Following \cite[\S 6]{Shabat} we will identify the following parts of a framed tree dessin $T$:
\begin{enumerate}
\item{The {\em spine} $s(T)$ of $T$ is the unique path in $T$ from $0$ to $1$, }
\item{The {\em body} $b(T)$ of $T$ is the subtree consisting of the spine and all branches attached to a vertex in the spine, different from $0$ and $1$,}
\item{The {\em head} $h(T)$ of $T$ consists of all branches attached to $0$, except for the spine ,}
\item{The {\em tail} $t(T)$ of $T$ consists of all branches attached to $1$, except for the spine.}
\end{enumerate}
Given framed tree dessins $T$ and $T'$ one constructs a new framed tree dessin $T \circ T'$ as follows (see \cite[\S 6]{Shabat}):
\begin{enumerate}
\item{To each black vertex $b$ in $T'$ attach a number of heads of $T$ equal to the valency of $b$, and separate these copies by the edges in $T'$ at $b$ .}
\item{To each white vertex $w$ in $T'$ attach a number of tails of $T$ equal to the valency of $w$, and separate these copies by the edges in $T'$ at $w$.}
\item{Every edge $e$ in $T'$ will be oriented from the black vertex to the white vertex. Replace $e$ by a copy of the body of $T$ respecting the orientation, that is, such that the $0$-vertex in $T$ glues to the black vertex and the $1$-vertex to the white vertex of $e$.}
\item{The framing of $T \circ T'$ is inherited from that of $T'$.}
\end{enumerate}

If $T\simeq D(B)$ and $T' \simeq D(B')$, then $T \circ T' \simeq D(B \circ B')$, which allows us to study the Belyi monoids $B_{\overline{\Q}}$ and $B$ purely combinatorially by investigating the monoid structure on framed tree dessins. The absolute Galois group $Gal(\overline{\Q}/\Q)$ acts on (dynamical) Belyi polynomials and hence on (framed) tree dessins. We recall some Galois invariants and how they behave under composition.

For a (framed) tree dessin $T$ the {\em passport} $p(T)$ is the couple $(p_w,p_b)$ of partitions of $d$ which are the valencies $p_w$ (resp. $p_b$) of the white (resp. black) vertices of $T$. The {\em automorphism group} $Aut(T)$ is the group of combinatorial self-equivalences of $T$. This group is always cyclic and can be viewed as rotations around a central vertex preserving colours and valencies. The {\em monodromy group} $\mu(T)$ is the (conjugacy class) of the subgroup of $S_d$ generated by the permutation $\alpha$ having as its cycles the cyclic ordering of edges $c_v$ for all black vertices $v$, and $\beta$ with cycles $c_v$ for all white vertices of $T$.

If the passports of the head (minus $0$) of $T$ be $(h_b,h_w)$, of the tail (minus $1$) of $T$ be $(t_b,t_w)$, of the body (minus $0$ and $1$) of $T$ be $(b_b,b_w)$. Let the valency at $0$ in $T$ be $i$ and the valency at $1$ in $T$ be $j$.
Then, if the passport of $T'$ is $(p_b,p_w)$, and the degree of $B'$ is $d$, the passport of $T \circ T'$ is
\[
p(T \circ T') = (h_b^d \times b_b^d \times t_b^d \times (i.p_b),h_w^d \times b_w^d \times t_w^d \times (j.p_w)) \]

If $(\alpha_T,\beta_T)$ is the pair of permutations of $E=E(T)$ giving the cyclic order of edges around the back (resp. white) vertices of $T$, and let $(\alpha_{T'},\beta_{T'})$ be the permutations on the edges $F=E(T')$ of $T'$. Let $e_0 \in E$ be the edge of the spine of $T$ connected to $0$ and $e_1$ the edge of the spine of $T$ connected to $1$. 
If we identify $E \times F$ to the edges $E(T \circ T')$, then the permutations giving the cyclic order of edges of $T \circ T'$ around the black (resp. white) vertices are
\[
\alpha_{T \circ T'}(e,f) = \begin{cases} (\alpha_T(e),f)~\text{if $e \not= e_0$,} \\
(\alpha_T(e),\alpha_{T'}(f))~\text{if $e=e_0$}
\end{cases} \]
\[
\beta_{T \circ T'}(e,f) = \begin{cases} (\beta_T(e),f)~\text{if $e \not= e_1$,} \\
(\beta_T(e),\beta_{T'}(f))~\text{if $e=e_1$}
\end{cases}
\]

\begin{proposition} \label{cancel} The Belyi monoids $B_{\overline{\Q}}$ and $B$ are both left- and right-cancellative, that is, for all dynamical Belyi polynomials $B,B',B"$ 
\[
B \circ B' = B \circ B" \Rightarrow B' = B" \quad \text{and} \quad B' \circ B = B" \circ B \Rightarrow B'=B" \]
In fact, we have the stronger result that if $deg(B)=deg(B_1)$ and if
\[
B' \circ B = B" \circ B_1 \Rightarrow B'=B"~\text{and}~B=B_1 \]
\end{proposition}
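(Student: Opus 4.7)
My plan is to obtain all three assertions from a single ``Ritt-type'' decomposition result: if $F = A\circ C = A_1\circ C_1$ in $\overline{\Q}[x]$ with $\deg C = \deg C_1$, then there exists a (uniquely determined) linear polynomial $L(y) = \lambda y + \mu \in \overline{\Q}[y]$ such that $C_1 = L\circ C$ and $A_1 = A\circ L^{-1}$. Both cancellation laws will fall out as special cases of the stronger assertion, once one has checked the required degree equality in each case.

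Concretely, I would first reduce the stronger assertion to this lemma by comparing total degrees: from $\deg(B')\cdot\deg(B) = \deg(B'')\cdot\deg(B_1)$ and $\deg B = \deg B_1$ one gets $\deg B' = \deg B''$, so the decompositions $F := B'\circ B = B''\circ B_1$ have matching degree profiles, and the lemma produces a linear $L$ with $B_1 = L\circ B$ and $B'' = B'\circ L^{-1}$. I would then use the dynamical Belyi conditions $B(0) = B_1(0) = 0$ and $B(1) = B_1(1) = 1$ to pin down $L$: these force $L(0) = 0$ and $L(1) = 1$, so $\mu = 0$ and $\lambda = 1$, whence $L = \mathrm{id}$, $B = B_1$ and $B'' = B'$. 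Right-cancellation is the case $B = B_1$ (the degree condition being automatic), and left-cancellation ($B\circ B' = B\circ B''$) is recovered by swapping the roles, with the required $\deg B' = \deg B''$ extracted from the equality of composite degrees; in each case the stronger result yields the desired equality.

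The main obstacle is the Ritt-type lemma itself. I would prove it by passing to $\overline{\Q}(t)[x]$ and writing the two factorizations
\[
F(x) - t \;=\; \mathrm{LC}(A)\prod_{i=1}^{d}\bigl(C(x) - \beta_i\bigr) \;=\; \mathrm{LC}(A_1)\prod_{j=1}^{d}\bigl(C_1(x) - \beta_j'\bigr)
\]
over an algebraic closure of $\overline{\Q}(t)$, where $\{\beta_i\} = A^{-1}(t)$ and $\{\beta_j'\} = A_1^{-1}(t)$, and then matching them in this UFD. Comparing $x$-leading coefficients of the factors gives a uniform ratio $\lambda := \mathrm{LC}(C)/\mathrm{LC}(C_1)$, forcing $C(x) - \beta_i = \lambda\bigl(C_1(x) - \beta_{\sigma(i)}'\bigr)$ for some permutation $\sigma$. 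The quantity $C(x) - \lambda C_1(x) = \beta_i - \lambda\beta_{\sigma(i)}'$ is then simultaneously a polynomial in $x$ and a constant in $i$, hence a fixed constant $\mu$; reading off gives $C = \lambda C_1 + \mu$, and transporting through the roots yields $A_1 = A\circ L$ with $L(y) = \lambda y + \mu$. The subtle point is that the factors $C(x) - \beta_i$ need not be irreducible when $C$ itself admits a nontrivial polynomial decomposition; matching nonetheless goes through at the level of these degree-$n$ groupings because the leading-coefficient equality pins down the proportionality constant uniformly, which sidesteps invoking the full machinery of Ritt's second theorem.
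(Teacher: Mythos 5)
Your overall strategy is the same as the paper's: the paper likewise deduces everything from the statement that two decompositions $B' \circ B = B'' \circ B_1$ with $\deg B = \deg B_1$ differ by a linear polynomial $L$ with $B_1 = L\circ B$, and then forces $L=\mathrm{id}$ from $B(0)=B_1(0)=0$ and $B(1)=B_1(1)=1$. Your reduction of both cancellation laws to the stronger statement and your normalization of $L$ are correct (the paper gets right-cancellation even more cheaply, directly from surjectivity of $B:\C\to\C$). The difference is that the paper simply cites \cite[Lemma 2.4.18]{Lando} for the Ritt-type lemma, whereas you attempt to prove it, and that attempted proof has a genuine gap at the matching step.

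The gap is here: from $\mathrm{LC}(A)\prod_i\bigl(C(x)-\beta_i\bigr)=\mathrm{LC}(A_1)\prod_j\bigl(C_1(x)-\beta_j'\bigr)$ in the UFD $\overline{\overline{\Q}(t)}[x]$ you conclude that each $C(x)-\beta_i$ is proportional to some $C_1(x)-\beta_{\sigma(i)}'$, the constant being read off from leading coefficients. But unique factorization only matches \emph{irreducible} factors, and when $C(x)-\beta_i$ is reducible nothing in your argument prevents its irreducible constituents from being redistributed across several of the $C_1(x)-\beta_j'$; the uniform leading-coefficient ratio is no obstruction to that. Indeed the principle you invoke fails for general products of equal-degree polynomials: $(x^2-1)(x^2-4)=(x^2-3x+2)(x^2+3x+2)$ is an equality of products of monic quadratics in which no factor on the left is proportional to any factor on the right. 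What makes the lemma true is precisely the extra structure that all left-hand chunks are translates $C-\beta_i$ of one polynomial and all right-hand chunks are translates of $C_1$ --- equivalently, that the partition of the (simple) roots of $F-t$ into $C$-fibres coincides with the partition into $C_1$-fibres --- and establishing that is essentially the content of the lemma, not a consequence of comparing leading coefficients. To repair the argument, either cite the lemma as the paper does, or run the standard expansion-at-infinity proof: a branch of $F^{1/m}$ (with $m=\deg A=\deg A_1$) has Puiseux expansion at $\infty$ whose polynomial part is $\mathrm{LC}(A)^{1/m}\bigl(C+\tfrac{a_{m-1}}{m\,a_m}\bigr)$, hence is determined by $F$ up to an $m$-th root of unity; comparing the two decompositions then gives $C_1=\lambda C+\mu$ directly, after which your normalization at $0$ and $1$ finishes exactly as you wrote.
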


\begin{proof} If $deg(B)=deg(B_1)$ and $B' \circ B = B" \circ B_1$, then by \cite[Lemma 2.4.18]{Lando} there exists $L = \alpha x + \beta \in Aut(\overline{\Q})$ such that $B_1 = L \circ B = \alpha B + \beta$. But then, from $B(0)=0=B_1(0)$ and $B(1)=1=B_1(1)$ it follows that $\alpha = 1$ and $\beta=0$ and therefore $B=B_1$. For every $c \in \C$ take a $d \in B^{-1}(c)$, then if $B' \circ B = B" \circ B$ we have that $B'(c) = B' \circ B(d) = B" \circ B(d) = B"(c)$, whence $B'=B"$.
\end{proof}

In \cite[Prop. 3.1]{Anderson} the following family of dynamical Belyi polynomials defined over $\Q$ was introduced: for each degree $d$ and  all $0 \leq k < d$ consider the polynomial
\[
B_{d,k} = c x^{d-k}(a_0 x^k + \hdots + a_{k-1} x + a_k) \in \Q[x] \]
with
\[
a_i = \frac{(-1)^{k-i}}{d-i} \binom{k}{i} \quad \text{and} \quad c = \frac{1}{k!} \prod_{j=0}^k (d-j) \]
Note that $B_{d,0} = x^d$ and $B_{d,d-1} = 1 - (1-x)^d$. 
The corresponding framed tree dessin $D(B_{d,k})=E_{d,k}$ has vertex $0$ of valency $d-k$, vertex $1$ of valency $k+1$ and all remaining vertices are leaf vertices. For example
\[
E_{8,3}~=~\begin{tikzpicture}[scale=.7]
\tikzstyle{every node}=[draw,shape=circle,fill=black];
\path (0:0cm) node (v0) {$\color{white}{0}$};
\node (v7) at (3,0) {};
\node (v8) at (2,1) {};
\node (v9) at (2,-1) {};
\tikzstyle{every node}=[draw,shape=circle,fill=white];
\path (0:2cm) node (v1) {$1$};
\path (72:1cm) node (v2) {};
\path (2*72:1cm) node (v3) {};
\path (3*72:1cm) node (v4) {};
\path (4*72:1cm) node (v5) {};

\draw (v0) -- (v1)
(v0) -- (v2)
(v0) -- (v3)
(v0) -- (v4)
(v0) -- (v5)
(v1) -- (v7)
(v1) -- (v8)
(v1) -- (v9);
\end{tikzpicture}\]
$E_{d,k}$ has as passport of its head is $(1^{d-k-1},0)$, of the body $(0,0)$ and of the tail $(0,1^{k})$. But then, the passport of $E_{d,k} \circ T$ for a framed tree dessin $T$ with passport $(p_b,p_w)$ and $n$ edges
\[
p(E_{d,k} \circ T) = (1^{nk} \times (d-k).p_b,1^{n(d-k-1)} \times (k+1).p_w) \]

\begin{proposition} \label{free} The Belyi monoids $B_{\overline{\Q}}$ and $B$ contain free sub-monoids:
\begin{enumerate}
\item{If $\{ B_1,\hdots,B_k \}$ is a set of dynamical Belyi polynomials of fixed degree $d \geq 2$, then they generate a free sub-monoid of $B_{\overline{\Q}}$.}
\item{The dynamical Belyi polynomials $\{ B_{d,k}~|~1 \leq k \leq d-2, d \geq 2 \}$ generate a free sub-monoid of $B$.}
\end{enumerate}
\end{proposition}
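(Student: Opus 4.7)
The plan is to reduce both parts to the cancellation in Proposition~\ref{cancel}; Part (1) falls directly to the strong form, while Part (2) needs one extra combinatorial step using the passport formula recorded just before the proposition.

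\textbf{Part (1).} Since $\deg(B_{i_1}\circ\cdots\circ B_{i_n})=d^n$, any identity $B_{i_1}\circ\cdots\circ B_{i_n}=B_{j_1}\circ\cdots\circ B_{j_m}$ forces $n=m$. Rewriting both sides as $(B_{i_1}\circ\cdots\circ B_{i_{n-1}})\circ B_{i_n}=(B_{j_1}\circ\cdots\circ B_{j_{n-1}})\circ B_{j_n}$, the strong form of Proposition~\ref{cancel} applies (the two innermost factors share the degree $d$) and yields $B_{i_n}=B_{j_n}$ together with a shorter identity of the same shape on the outer composites. An induction on $n$ closes the argument.

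\textbf{Part (2).} The aim is to recognise the outermost factor $(d_1,k_1)$ from the passport of the whole composite and then peel it off by cancellation. As a preliminary I would check, by induction on length, that every dessin $T=E_{d_{i_1},k_{i_1}}\circ\cdots\circ E_{d_{i_n},k_{i_n}}$ with $1\le k_{i_j}\le d_{i_j}-2$ carries at least one black leaf and one white leaf: the base case is the single-edge dessin of the identity, and composing with $E_{d,k}$ plants $k\ge 1$ new black leaves at every white vertex and $d-k-1\ge 1$ new white leaves at every black vertex. Writing $W=B_{d_1,k_1}\circ T_{\text{inner}}$ and letting $N=\deg(T_{\text{inner}})$, the passport formula recalled in the excerpt reads
\[
p(W)=\bigl(\,1^{Nk_1}\times(d_1-k_1)\cdot p_b^{\text{inner}},\ 1^{N(d_1-k_1-1)}\times(k_1+1)\cdot p_w^{\text{inner}}\,\bigr).
\]
Because $d_1-k_1\ge 2$ and $k_1+1\ge 2$, every entry in the rescaled parts is $\ge 2$, so the only valency-$1$ black (resp.\ white) vertices of $W$ come from the $1^{Nk_1}$ (resp.\ $1^{N(d_1-k_1-1)}$) block. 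Combined with the preliminary step, the minimum black valency of $W$ that exceeds $1$ is therefore exactly $d_1-k_1$, and the analogous white minimum is $k_1+1$; hence the passport recovers $(d_1,k_1)$.

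Given now an identity $B_{d_1,k_1}\circ\cdots\circ B_{d_n,k_n}=B_{e_1,l_1}\circ\cdots\circ B_{e_m,l_m}$, equality of passports forces $(d_1,k_1)=(e_1,l_1)$, and the inner composites automatically share the degree $\deg(W)/d_1$, so the strong form of Proposition~\ref{cancel} strips the shared outer factor and leaves $B_{d_2,k_2}\circ\cdots\circ B_{d_n,k_n}=B_{e_2,l_2}\circ\cdots\circ B_{e_m,l_m}$. An induction on the total length $n+m$, with trivial base $n=m=0$, then delivers $n=m$ and the identification of all factors. The main obstacle is the passport step: the restriction $1\le k\le d-2$ is precisely what keeps the two sources of valency-$1$ vertices --- the outer ``planted'' leaves and the leaves surviving from $T_{\text{inner}}$ but rescaled by $d_1-k_1$ or $k_1+1$ --- disjoint in the multiset, and this disjointness is what makes $(d_1,k_1)$ legible from the passport alone.
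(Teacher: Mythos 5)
Your proof is correct and follows essentially the same route as the paper: part (1) by induction from the strong cancellation of Proposition~\ref{cancel}, and part (2) by exploiting the passport formula for $E_{d,k}\circ T$ together with the constraint $1\le k\le d-2$ (which forces all rescaled valencies to be $\ge 2$) to pin down the outer factor and then cancel. The only cosmetic difference is that the paper merely counts the black and white leaves ($nk$ and $n(d-k-1)$) and combines these with $nd=n'd'$ to deduce $n=n'$ before invoking cancellation, whereas you recover $(d_1,k_1)$ directly from the minimal valencies exceeding $1$, at the cost of the extra (correct) preliminary lemma that the inner dessin carries a leaf of each colour.
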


\begin{proof} $(1)$ follows by induction on the length of words from proposition~\ref{cancel}. For $(2)$ suppose that $B$ and $B'$ are dynamical Belyi polynomials of degrees $n$ and $n'$ such that $B_{d,k} \circ B = B_{d',k'} \circ B'$ for $1 \leq k \leq d-2$ and $1 \leq k' \leq d'-2$. From the passport of $D(B_{d,k} \circ B)$ it follows that $D(B_{d,k} \circ B)$ has $n \times k$ black leaf-vertices and $n \times (d-k-1)$ white leaf vertices. This gives us the equalities
\[
n \times k = n' \times k',~n \times (d-k-1)=n' \times (d'-k'-1),~\text{and}~n \times d = n' \times d' \]
from which $n=n'$ follows, but then proposition~\ref{cancel} finishes the proof.
\end{proof}

Note that the Belyi monoids have an involution sending $B$ to $(1-x) \circ B \circ (1-x)$. At the level of the framed tree dessins the involution reverses the colours of the vertices, swaps the $0$ and $1$ vertices and rotates the dessin by 180 degrees.

\vskip 3mm

Let $\mathbf{B}_L$ (resp. $\mathbf{B}_{\overline{\Q}L}$) be the poset-category with objects the elements of the Belyi monoid $B$ (resp. $B_{\overline{\Q}}$) and a unique arrow $B \rTo B'$ if and only if $B \leq B'$, that is $B = B' \circ B"$ for some dynamical Belyi polynomial $B"$.

\begin{proposition} For a dynamical Belyi polynomial $B \in \overline{\Q}[x]$ let $\overline{\Q}(B-t)$ be the finite Galois extension of $\overline{\Q}(t)$ generated by the roots of the polynomial $B-t \in \overline{\Q}(t)[x]$, then
\[
S_{geo}~:~\mathbf{B}_{\overline{\Q}L} \rTo \mathbf{Sets} \qquad B \mapsto \overline{\Q}(B-t) \]
is a presheaf, that is a contravariant functor. For a dynamical Belyi polynomial $B \in \Q[x]$ let $\Q(B-t)$ be the finite Galois extension of $\Q(t)$ generated by the roots of the polynomial $B-t \in \Q(t)[x]$, then
\[
S_{arith}~:~\mathbf{B}_L \rTo \mathbf{Sets} \qquad B \mapsto \Q(B-t) \]
is a presheaf.
\end{proposition}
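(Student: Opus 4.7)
The plan is to check three items for $S_{geo}$, after which the case of $S_{arith}$ follows by the same argument with $\overline{\Q}$ replaced by $\Q$ throughout. First, I would verify that each value $\overline{\Q}(B-t)$ is a well-defined finite Galois extension. Second, for each arrow $B \to B'$ of $\mathbf{B}_{\overline{\Q}L}$ --- by definition the relation $B \leq B'$, i.e.\ $B = B' \circ B''$ for some dynamical Belyi polynomial $B''$ --- I would produce a canonical set map $\overline{\Q}(B'-t) \to \overline{\Q}(B-t)$. Third, I would verify that these assignments respect identities and composition.

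To anchor the values unambiguously, I fix once and for all an algebraic closure $\Omega$ of $\overline{\Q}(t)$ and take $\overline{\Q}(B-t) \subseteq \Omega$ to be the subfield generated over $\overline{\Q}(t)$ by the roots of $B(x)-t$. As the splitting field of a polynomial in characteristic zero, this is automatically a finite Galois extension; separability of $B(x)-t$ is immediate because the formal derivative $\tfrac{dB}{dx}$ lies in $\overline{\Q}[x]$ and has only scalar roots, so any common root $\alpha$ with $B(x)-t$ would force $t = B(\alpha) \in \overline{\Q}$, contradicting the transcendence of $t$.

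For the map attached to an arrow $B \to B'$ with $B = B' \circ B''$, observe that any root $\alpha \in \Omega$ of $B(x)-t$ satisfies $B'(B''(\alpha)) = B(\alpha) = t$, so $B''(\alpha)$ is a root of $B'(y)-t$. A degree count then shows every root of $B'(y)-t$ arises this way: the $\deg(B) = \deg(B') \cdot \deg(B'')$ roots of $B(x)-t$ fibre through $B''$ over the $\deg(B')$ roots of $B'(y)-t$, each fibre of size $\deg(B'')$. Hence every generator of $\overline{\Q}(B'-t)$ already lies inside $\overline{\Q}(B-t)$, and I take $S_{geo}(B') \to S_{geo}(B)$ to be the resulting inclusion of subfields of $\Omega$.

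Functoriality is then formal: the identity arrow on $B$ corresponds to the factorisation $B = B \circ x$ and is sent to the identity inclusion, while a composable pair $B \to B' \to B'''$, say with $B = B' \circ B_1$ and $B' = B''' \circ B_2$ (so that $B = B''' \circ (B_2 \circ B_1)$), yields the tower $\overline{\Q}(B'''-t) \subseteq \overline{\Q}(B'-t) \subseteq \overline{\Q}(B-t)$ whose composite is the inclusion attached to the composed arrow. The arithmetic version $S_{arith}$ goes through word for word inside a fixed algebraic closure of $\Q(t)$. The only mildly delicate point, and hence the closest thing to an obstacle I anticipate, is this initial commitment to a single ambient algebraic closure; without it the inclusions are only well-defined up to an automorphism over $\overline{\Q}(t)$, and functoriality becomes awkward to state cleanly.
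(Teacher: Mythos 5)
Your argument is correct and is essentially the paper's: both reduce to the observation that every root $\alpha$ of $B'-t$ is of the form $B''(\beta)$ for a root $\beta$ of $B-t$, whence $\overline{\Q}(B'-t)\subseteq\overline{\Q}(B-t)$, with the $\Q$-case identical. You reach that surjectivity by a degree count where the paper simply picks a preimage of $\alpha$ under $B''$ directly, and you additionally spell out separability, the fixed ambient algebraic closure, and functoriality --- details the paper leaves implicit --- but the route is the same.
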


\begin{proof} For $B \rightarrow B'$ with $B = B' \circ B"$ let $\alpha$ be a root of $B'-t$. Then there is a root $\beta$ of $B-t$ such that $B"(\beta) = \alpha$. But then
\[
\overline{\Q}(t)(\alpha) \subseteq \overline{\Q}(t)(\beta) \subseteq \overline{\Q}(B-t) \]
As we can repeat this for all roots of $B'-t$, we have that $\overline{\Q}(B'-t)$ is a subfield of $\overline{\Q}(B-t)$. For $B=B' \circ B"$ in $\mathbf{B}_{L}$ we can repeat this argument giving us that
\[
\Q(t)(\alpha) \subseteq \Q(t)(\beta) \subseteq \Q(B-t) \]
and therefore $\Q(B'-t)$ is a subfield of $\Q(B-t)$.
\end{proof}

Recall that the monodromy group of the framed dessin $D(B)$ is isomorphic to the Galois group $Gal(\overline{\Q}(B-t)/\overline{\Q}(t))$. 

\vskip 3mm

Let $D$ be a sub-monoid of either $B$ or $B_{\overline{\Q}}$ and let $\mathbf{D}_L$ be the corresponding poset-category. As in the previous section, we know that points of $\widehat{\mathbf{D}_L}$ correspond to specific upwards closed subsets of $\mathbf{D}_L$. Repeating the argument here we get that the points correspond to equivalence classes of finite or infinite compositions $B_1 \circ B_2 \circ \hdots$, with all $B_i \in D$, under the relation
\[
B_1 \circ B_2 \circ \hdots \sim B'_1 \circ B'_2 \circ \hdots \Leftrightarrow \forall i, \exists j,k~:~\begin{cases}
B_1 \circ \hdots \circ B_i \geq B'_1 \circ \hdots \circ B'_j \\
B'_1 \circ \hdots \circ B'_i \geq B_1 \circ \hdots \circ B_k \end{cases} \]
Let $P_D = \mathbf{points}(\widehat{\mathbf{D}_L})$, then the {\em localic topology} of $P_D$ has as basis of open sets
\[
\mathbb{X}_l(B) = \{ p = [ B_1 \circ B_2 \circ \hdots ] \in P_D~|~\exists n~:~B_1 \circ \hdots \circ B_n \leq B \} \]
and as there are enough points to separate opens we have $\widehat{\mathbf{D}_L} \simeq \mathbf{Sh}(P_D,loc)$. The stalk of the sheaf $\mathcal{S}_{geo}$ or $\mathcal{S}_{arith}$ corresponding to the induced presheaf $S_{geo}$ or $S_{arith}$ on $\mathbf{D}_L$ at a point $p = [ B_1 \circ B_2 \circ \hdots ]$ is
\[
(\mathcal{S}_{geo})_p = \cup_n \overline{\Q}(B_1 \circ \hdots \circ B_n - t) \quad \text{and} \quad (\mathcal{S}_{arith})_p = \cup_n \Q(B_1 \circ \hdots \circ B_n - t) \]
If $D$ is a free sub-monoid on $n$ generators (see e.g. proposition~\ref{free}), then the points of $\widehat{\mathbf{D}_L}$ are in one-to-one correspondence with the finite or infinite paths from the root vertex in the complete rooted $n$-tree. The localic topology in this case has a basis of opens $U_{\gamma}$ consisting of all paths containing a fixed finite path $\gamma$.

\vskip 3mm

We will relate the above to the theory of {\em arboreal Galois representations}, see for example \cite{Jones}. For fixed $d$ let $T_n$ be the complete $d$-ary rooted tree of level $n$. Let $Aut(T_n)$ be the automorphism group of $T_n$ which is a subgroup of $S_{d^n}$ after choice of labelling of the leaf-vertices. Clearly, $Aut(T_1) \simeq S_d$ and by induction
\[
Aut(T_n) \simeq Aut(T_{n-1}) \wr Aut(T_1) \]
whence $Aut(T_n)$ is a subgroup of the $n$-fold iterated wreath product of $S_d$ by itself.

Let $P \in \Q[x]$ of degree $d$, and $\alpha \in \Q$ be such that for all $n$ we have that 
\[
P^{(n)}-\alpha = \underbrace{P \circ \hdots \circ P \circ P}_n - \alpha \]
has $d^n$ distinct roots in $\overline{\Q}$. Label the vertices of $T_n$ as follows: the root vertex will be $\alpha$, the vertices in the first layer will be the $d$ roots of $P-\alpha$, those in the second layer the roots of $P^{(2)}-\alpha$ grouped together so that their images under $P$ map to the same root of $P-\alpha$, and so on until the leaf-vertices which are the roots of $P^{(n)}-\alpha$ grouped together such that their images under $P$ map to the same root of $P^{(n-1)}-\alpha$. Let $K_n$ be the Galois extension of $\Q$ generated by the roots of $P^{(n)}-\alpha$, then the action of $Gal(K_n/\Q)$ gives a morphism
\[
\phi_n~:~Gal(K_n/\Q) \rInto Aut(T_n) \]
If $T_{\infty}$ is the infinite complete $d$-ary rooted tree, this procedure results in a continuous morphism
\[
\phi_P~:~Gal(\overline{\Q}/\Q) \rOnto G_{\infty,P,\alpha} = \underset{\infty \leftarrow n}{lim}~Gal(K_n/\Q) \rTo Aut(T_{\infty}) \]
which is called the {\em arboreal Galois representation} determined by $P$ and $\alpha$. 

By a result of Jones and Pink, \cite[Thm. 3.1]{Jones}, the index $[ T_{\infty} : G_{\infty,P,\alpha}]$ is infinite for post-critically finite polynomials $P$, hence also for all dynamical Belyi polynomials $B \in \Q[x]$. In \cite{Benedetto} the first explicit description of the arboreal representation of a cubic polynomial was given for $P=-2x^3+3x^2=B_{3,2}$ and this was extended in \cite{BouwDBM} to all rational Belyi polynomials $B_{d,k}$ described above.

\vskip 3mm

Let $D$ be the (free) sub-monoid generated by all dynamical Belyi polynomials $B_i \in \Q[x]$ of degree $d$ (among which are the $B_{d,k}$ for $0 \leq k < d$). Let $\alpha \in \Q \cap (0,1)$ be generic for the finite number of $B_i$, that is $B_i(\alpha) \not= 0,1$. Let $\mathcal{S}_{\alpha}$ be the sheaf on $(P_D,loc)$ corresponding to the contravariant functor
\[
S_{\alpha}~:~\mathbf{D}_L \rTo \mathbf{Sets} \qquad B \mapsto \Q(B-\alpha) \]

\begin{proposition} The stalk of the sheaf $\mathcal{S}_{\alpha}$ in the point $p=B_{i_1} \circ B_{i_2} \circ \hdots$ is an infinite Galois extension $K_p$ of $\Q$ giving rise to a generalised arboreal Galois representation of the absolute Galois group
\[
\phi_p~:~Gal(\overline{\Q}/\Q) \rOnto Gal(K_p/\Q) \rTo Aut(T_{\infty}) \]
where the $n$-th layer of $T_{\infty}$ is labeled by the $d^n$ distinct roots of $B_{i_1} \circ \hdots \circ B_{i_n}-\alpha$, grouped together in $d$-tuples having as their image under $B_{i_n}$ the same root of $B_{i_1} \circ \hdots \circ B_{i_{n-1}}-\alpha$. In particular, for $p=B_i \circ B_i \circ \hdots = B_i^{(\infty)}$ we recover the arboreal Galois representation corresponding to the dynamical Belyi polynomial $B_i$.
\end{proposition}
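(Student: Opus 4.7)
The plan is to unpack the stalk computation, exhibit Galois-to-tree injections at each finite level, and pass to the inverse limit. By the stalk formula displayed just before the proposition, $(\mathcal{S}_\alpha)_p = \bigcup_n K_n$ where $K_n := \Q(B_{i_1} \circ \cdots \circ B_{i_n} - \alpha)$ is the splitting field over $\Q$ of a polynomial with rational coefficients. Each $K_n/\Q$ is therefore Galois, and $K_n \subseteq K_{n+1}$ because $B_{i_{n+1}} \in \Q[x]$ sends the roots of $B_{i_1}\circ\cdots\circ B_{i_{n+1}}-\alpha$ onto the roots of $B_{i_1}\circ\cdots\circ B_{i_n}-\alpha$. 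Thus $K_p := \bigcup_n K_n$ is Galois over $\Q$ and $Gal(K_p/\Q) = \varprojlim_n Gal(K_n/\Q)$.

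Next I would verify the separability claim that $B_{i_1}\circ\cdots\circ B_{i_n}-\alpha$ has $d^n$ distinct roots. Every Belyi polynomial $B_i$ has critical values confined to $\{0,1,\infty\}$, and being dynamical fixes each of $0,1,\infty$; consequently $\{0,1,\infty\}$ is forward-invariant, and by the chain rule the composition $B_{i_1}\circ\cdots\circ B_{i_n}$ is again post-critically finite with all critical values in $\{0,1,\infty\}$. Genericity of $\alpha$ --- the stated conditions $\alpha \in \Q\cap(0,1)$ and $B_i(\alpha)\neq 0,1$ for every generator $B_i$ --- then forces $\alpha$ to be a regular value of every composition, so each $B_{i_1}\circ\cdots\circ B_{i_n}-\alpha$ is separable with $d^n$ simple roots.

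The tree labeling is then forced by the composition: declare $B_{i_n}(\beta)$ to be the parent of $\beta$ in $T_\infty$, so that the $n$-th layer consists of the $d^n$ roots of $B_{i_1}\circ\cdots\circ B_{i_n}-\alpha$, grouped in $d$-tuples with common image under $B_{i_n}$. Because every $B_{i_j}\in\Q[x]$, this parent-child relation is $Gal(\overline{\Q}/\Q)$-equivariant, so the natural permutation action of $Gal(K_n/\Q)$ on the $n$-th layer preserves the induced tree structure up to level $n$ and yields an injection $Gal(K_n/\Q)\hookrightarrow Aut(T_n)$. Taking inverse limits produces the continuous map $Gal(K_p/\Q)\to Aut(T_\infty)$; precomposing with the restriction surjection $Gal(\overline{\Q}/\Q)\twoheadrightarrow Gal(K_p/\Q)$ yields $\phi_p$. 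The ``in particular'' clause is immediate upon taking every $B_{i_j}$ equal to a single $B_i$: the construction then reproduces verbatim the arboreal representation of $B_i$ at $\alpha$ recalled before the proposition.

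The main obstacle is the separability step: a \emph{single} choice of $\alpha$ must work simultaneously for arbitrarily long words in the (finitely many) degree-$d$ generators. Post-critical finiteness, together with $\{0,1,\infty\}$ being pointwise fixed by every $B_i$, reduces this infinite family of conditions to the elementary constraint that $\alpha$ avoids $\{0,1\}$ and its immediate images $B_i(\alpha)\notin\{0,1\}$, which is precisely the author's definition of ``generic''.
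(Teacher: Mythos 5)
The paper states this proposition without any proof, so there is no ``paper's argument'' to compare against; judged on its own merits, your write-up is essentially correct and supplies the details the statement presupposes: the stalk is $\bigcup_n K_n$ with $K_n/\Q$ Galois and $K_n\subseteq K_{n+1}$ because $B_{i_{n+1}}\in\Q[x]$ maps roots at level $n+1$ onto roots at level $n$; the critical values of any composition of dynamical Belyi polynomials stay inside the forward-invariant set $\{0,1,\infty\}$, so $\alpha\in(0,1)$ is a regular value and each level has $d^n$ simple roots; and the parent map $\beta\mapsto B_{i_n}(\beta)$ is $Gal(\overline{\Q}/\Q)$-equivariant, giving compatible injections $Gal(K_n/\Q)\hookrightarrow Aut(T_n)$ and, in the limit, $\phi_p$. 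One small correction: for separability the condition $\alpha\notin\{0,1\}$ already suffices, precisely because the critical values of every composition lie in $\{0,1,\infty\}$; the author's extra requirement $B_i(\alpha)\neq 0,1$ plays no role in your argument, so your closing sentence overstates what the reduction actually uses.

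The one genuine omission is the word \emph{infinite}: the proposition asserts $[K_p:\Q]=\infty$, and an increasing union of splitting fields can in principle stabilize, so this needs an argument. A standard one: since $h(B_i(\beta))=d\,h(\beta)+O(1)$ with the implied constant uniform over the finitely many degree-$d$ generators, every root $\beta$ occurring at any level satisfies $h(\beta)\leq h(\alpha)+C$ for an absolute constant $C$. If $[K_p:\Q]$ were finite, all these roots would have degree at most $[K_p:\Q]$ and bounded height, hence by Northcott's theorem there would be only finitely many of them --- contradicting the $d^n$ distinct roots at level $n$. Adding that paragraph closes the gap; everything else stands.
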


Because the Belyi monoids $B$ and $B_{\overline{\Q}}$ are right-cancellative we have a covariant functor $\pi : \mathbf{B}_{(\overline{\Q})L} \rTo \mathbf{B}_{(\overline{\Q})}$ which assigns to the unique morphism $B \rTo B'$ between two objects of $\mathbf{B}_{(\overline{\Q})L}$ such that $B \leq B'$ the unique element $B" \in B_{(\overline{\Q})}$ such that $B=B' \circ B"$. This functor induces a surjection on the points of the corresponding presheaf toposes giving us that
\[
\mathbf{points}(\widehat{B_{(\overline{\Q})}}) = [P_{B_{(\overline{\Q})}}] \]
where the equivalence relation on points is given by restricting to their tails.

The degree function $d$ of a dynamical Belyi polynomial gives a morphism of monoids $d : B \rTo \N^{\times}_+$ and $d : B_{\overline{\Q}} \rTo \N^{\times}_+$, and therefore geometric morphisms to the arithmetic site
\[
\widehat{\mathbf{B}} \rTo \widehat{\mathbf{A}} \quad \text{and} \quad \widehat{\mathbf{B}_{\overline{\Q}}} \rTo \widehat{\mathbf{A}} \]
Remains to link the Belyi-site(s) to the Conway-site. The approach below is inspired by \cite[Prop. 2.25]{ManinMarcolli}. Let $B$ and $B'$ be two dynamical Belyi polynomials, then we have seen that the number of black vertices of $D(B \circ B')$, that is,
\[
\# (B \circ B')^{-1}(0) = deg(B')(\# B^{-1}(0)-1) + \# (B')^{-1}(0). \]
But then, the map
\[
\beta~:~B_{(\overline{\Q})} \rTo C \qquad B \mapsto \begin{bmatrix} \frac{1}{deg(B)} & \frac{\# B^{-1}(0)-1}{deg(B)} \\ 0 & 1 \end{bmatrix} \]
is a morphism of monoids, with image the sub-monoid of $C$ generated by the $P_i$ with $0 \leq i < p$ for all prime numbers $p$ (look at the images of the $B_{d,k}$). Further, the hyperdistance $\delta(\beta(B),1) = deg(B)$, so the monoid map $\beta$ induces a geometric morphism between the sites giving rise to the commuting triangle
\[
\xymatrix{\widehat{\mathbf{B}} \ar@{->>}[rd]_{d} \ar@{->>}[rr]^{\beta} & & \widehat{\mathbf{C}} \ar@{->>}[ld]^{\delta} \\
& \widehat{\mathbf{A}} &} \]


\begin{thebibliography}{10}

\bibitem{Anderson}
Jacqueline Anderson, Irene Bouw, Ozlem Ejder, Neslihan Girgin, Valentijn Karemaker and Michelle Manes, {\em Dynamical Belyi maps}, {\tt arXiv:1703.08563} (2017)

\bibitem{Benedetto}
Robert L. Benedetto, Xander Faber, Benjamin Hutz, Jamie Juul and Yu Yasufuku, {\em A large arboreal Galois representation for a cubic postcritically finite polynomial}, {\tt arXiv:1612.03358} (2016)



\bibitem{BouwDBM}
Irene I. Bouw, \"Ozlem Ejder and Valentijn Karemaker, {\em Dynamical Belyi maps and arboreal Galois groups}, {\tt arXiv:1811.10086} (2018)

\bibitem{CCAS}
Alain Connes and Caterina Consani, {\it The Arithmetic Site, Le Site Arithm\'etique}, {\tt arXiv:1405.4527} (2014)

\bibitem{Conway}
John H. Conway, {\it Understanding groups like $\Gamma_0(N)$}, In "Groups, difference sets, and the Monster", Walter de Gruyter (1996)

\bibitem{HemelaerAT}
Jens Hemelaer, {\it Azumaya toposes}, {\tt arXiv:1707.03814} (2017)

\bibitem{Jones}
Rafe Jones, {\em Galois representations from pre-image trees: an arboreal survey}, Publ. Math. Besan\c{c}on Alg\`ebre Th\'eorie (2013)

\bibitem{Lando}
Sergei K. Lando and Alexander K. Zvonkin, {\em Graphs on surfaces and their applications}, Encyclopaedia of mathematical sciences, Vol. 141, Springer (2004)


\bibitem{LBSieve}
Lieven Le Bruyn, {\it The sieve topology on the arithmetic site}, Journal of Algebra and Applications (2016)

\bibitem{LBcovers}
Lieven Le Bruyn, {\it Covers of the arithmetic site}, {\tt arXiv:1602.01627v2} (2017)

\bibitem{LBMoonshine}
Lieven Le Bruyn, {\it The monstrous moonshine picture}, {\tt arXiv:1804.04127} (2018)

\bibitem{ManinMarcolli}
Yuri I. Manin and Matilde Marcolli, {\em Quantum statistical mechanics of the absolute Galois group}, {\tt arXiv:1907.13545v1} (2019)

\bibitem{MTabBC}
Matilde Marcolli and Goncalo Tabuada, {\it Bost-Connes systems, categorification, quantum statistical mechanics, and Weil numbers}, {\tt arXiv:1411.3223} (2014)

\bibitem{Shabat}
George Shabat and Alexander Zvonkin, {\em Plane trees and algebraic numbers}, AMS Contemporary Mathematics 178 (1994) 233-275


\bibitem{Wood}
Melanie Wood, {\em Belyi-extending maps and the Galois action on dessins d'enfant}, {\tt arXiv:0304489v2} (2005)

\end{thebibliography}
\end{document}